\def\dint{\displaystyle\int}
\def\dsum{\displaystyle\sum}
\def\dprod{\displaystyle\prod}
\def\ub{\textbf u}
\newtheorem{thm}{Theorem}
\newtheorem{thms}{Theorem}[section]
\newtheorem{lemma}{Lemma}[section]
\newtheorem{Def}{Definition}[section]
\newtheorem{prop}{Proposition}[section]
\newtheorem{cor}{Corrolary}[section]
\begin{document}

\title{Equivariant mirror symmetry for the weighted projective line}
\date{\today}
\author{Dun Tang}
\maketitle
\begin{abstract}
In this paper, we establish equivariant mirror symmetry for the weighted projective line. This extends the results by B. Fang, C.C. Liu and Z. Zong, where the projective line was considered [{\it Geometry \& Topology} 24:2049-2092, 2017].  More precisely, we prove the equivalence of the $R$-matrices for A-model and B-model at large radius limit, and establish isomorphism for $R$-matrices for general radius. We further demonstrate that the graph sum of higher genus cases are the same for both models, hence establish equivariant mirror symmetry for the weighted projective line.
\end{abstract}

\tableofcontents

\section{Introduction}

In mirror symmetry for toric varieties, one aims at establishing equivalence between A-model invariants and the Ginzburg-Landau B-model invariants for a given toric variety.

On the A-model side, there are extensive studies for the equivariant Gromov-Witten theory. In \cite{Given1}, A.B. Givental computed all genus descendent invariants of equivariant Gromov-Witten theory of tori action with isolated fixed points for the smooth case. The process of recovering higher genus data is now known as Givental's formula. In \cite{Zong}, Z. Zong gave all genus equivariant Gromov-Witten invariants for GKM orbifolds by generalizing Givental's formula to the orbifold case.

On the B-model side, B. Eynard and N. Orantin discovered a way to compute the topological expansion of matrix integrals in \cite{Eynard}. Eynard-Orantin's topological recursion is related to Givental's formula \cite{Dunin}.

More recently, B. Fang, C.C. Liu and Z. Zong established the equivariant mirror symmetry for the projective line. They directly computed the $R$-matrices of both A and B-models, and applied Givental's formula and Eynard-Orantin's recursion, thereby proved the equivariant mirror symmetry for the projective line by calculating graph sums  \cite{Fang1}.

In this paper, we extend the equivariant mirror symmetry to the weighted projective line.
First, we associate the equivariant Gromov-Witten invariants of the weighted projective line to the Eynard-Orantin invariants of the affine curve determined by the superpotential of its $T$-equivariant Landau-Ginzburg mirror. It is proved by calculating the graph sum and applying the main results in \cite{Eynard}\cite{Given}\cite{Given1}. We use the equivalence of $R$-matrices in both models, which is established by computations with quantum Riemann-Roch, and integration on Lefschetz thimble at the large radius limit \cite{Tseng10}.

Secondly, we establish a precise correspondence between A-model genus $g, N$ point descendent equivariant Gromov-Witten invariants, and the Laplace transform of B-model Eynard-Orantin invariant along Lefschetz thimbles.

\section*{Acknowledgment}
The author would like to thank Professor Bohan Fang for leading me into this field, and his stimulating advices throughout the study and preparation of the paper.	The research project is financially supported by the National Innovation Program for Undergraduate Research.

\section{The Frobenius Manifold}
We assume $m$ and $n$ are coprime throughout this paper, i.e., $W\mathbb{P}(m,n)$ has trivial generic stabilizer.
We use bold form quantities $\bold{w_\ell,w_{-\ell},p,q_\ell,q_0,q_{-\ell}}$ to represent cohomology classes and normal form quantities $w_1,w_2,p,q_l,q_0,q_{-l}$  to represent complex numbers.

A weighted projective line $W\mathbb{P}(m,n)$ is given by the fan below by standard toric construction:
\begin{figure}[h]
\begin{center}
\setlength{\unitlength}{2mm}
\begin{picture}(20,5)
\put(12,4){\vector(-1,0){10}}
\put(12,4){\vector(1,0){6}}
\put(12,3){${}^|$}
\put(1,1){$-m$}
\put(17,1){$n$}
\end{picture}
  \label{fig0}
\end{center}
\end{figure}
\vspace{-5mm}

\noindent i.e., $W\mathbb{P}(m,n)=(\mathbb{C}^2\setminus\{(0,0)\})/\sim$, where $\sim$ is defined by $(z_1,z_2)\sim (\lambda^n z_1,\lambda^m z_2)$. A point in $W\mathbb{P}(m,n)$ is given by homogeneous coordinates $[z_1:z_2]$. Let $T=(\mathbb{C}^\ast)^2$ act on $W\mathbb{P}(m,n)$ by $(t_1,t_2)\cdot[z_1,z_2]=[t_1z_1,t_2z_2]$.

\subsection{Jacobian ring}
\

For $y\in\mathbb{C}$, let $Y=e^y\in \mathbb{C}^\ast$.
Define the equivariant superpotential $W_T:\mathbb{C}^\ast\to \mathbb{C}$ by
\[
W_T(Y)= Y^m+ \dsum_{\ell=1}^{m-1}\tilde{q}_\ell Y^\ell + \dsum_{\ell=1}^{n}\tilde{q}_{-\ell} Y^{-\ell} + \tilde{w}_m\log (Y^m) +\dsum_{\ell=1}^{m-1}\tilde{w}_\ell\log(\tilde{q}_\ell Y^\ell) + \dsum_{\ell=1}^{n} \tilde{w}_{-\ell}\log(\tilde{q}_{-\ell}  Y^{-\ell}).
\]

Let $x=W_T(e^y)$. The Jacobian ring of $W_T$ is
\[
\mathrm{Jac}(W_T)= \mathbb{C}[w][Y]/\left<\dfrac{\partial W_T}{\partial y}\right> = \mathbb{C}[w][Y]/\left<mY^m +  \dsum_{\ell=1}^{m-1}\ell \tilde{q}_\ell Y^\ell - \dsum_{\ell=1}^{n} \ell \tilde{q}_{-\ell} Y^{-\ell} +\tilde{p}\right>,
\]
where $\tilde{p}=\sum_{\ell=1}^m \ell \tilde{w}_\ell - \sum_{\ell=1}^n \ell \tilde{w}_{-\ell}$.

Let $\{p^\alpha\}$ be the set of critical points of $W_T$. Define residual pairing $(f,g)$ on $\mathrm{Jac}(W_T)$ by $(f,g)= \dsum_{\alpha} \mathrm{Res}_{p^\alpha} \dfrac{f(Y)g(Y)}{\big({\partial W_T}/{\partial y}\big)} \dfrac{dY}Y$.


\subsection{Equivariant Gromov-Witten potentials}
\

The Chen-Ruan cohomology for $W\mathbb{P}(m,n)$ is
$H_{CR}^\ast(W\mathbb{P}(m,n))=\mathbb{C}[X,\bar{X}]/\left<X\bar{X},mX^m-n\bar{X}^n\right>$
\cite{Adem07}. For more details, please refer to \cite{Adem07} and the original papers \cite{Chen04,Chen01,Chen}.

Let $\mathbb{C}[\bold{w}]=\mathbb{C}[\bold{w_{-n},\cdots,w_{-1},w_1,\cdots,w_m}]$ be the equivariant cohomology ring of a point with $T=\mathbb{T}^{m+n}$ action. Example 105 in \cite{Liu13} gives the equivariant Chen-Ruan cohomology $H_T^\ast(W\mathbb{P}(m,n))\cong \mathbb{C}[\bold{w}][X,\bar{X},\bold{p}]/\left<X\bar{X},mX^m-n\bar{X}^n+\bold{p}\right>$,
with $\bold{p}=\sum_{\ell=1}^m \ell \bold{w}_\ell - \sum_{\ell=1}^n \ell \bold{w}_{-\ell}$.

Suppose that $d>0$ or $2g-2+N>0$, then $\bar{\mathcal{M}}_{g,n}(W\mathbb{P}(m,n),d)$ is well defined. Let $\mathbb{L}_i$ be the i-th tautological bundle, whose restriction to a point $[\Sigma,x_1,\cdots,x_n]\in \bar{\mathcal{M}}_{g,n}(W\mathbb{P}(m,n),d)$ is isomorphic to $T_{x_i}^\ast \Sigma$.
Let $\psi_i=c_1(\mathbb{L}_i)$.
Let $\mathrm{ev}_i: \bar{\mathcal{M}}_{g,n}(W\mathbb{P}(m,n),d) \to W\mathbb{P}(m,n)$ be the evaluation at the $i$-th point.
For $\gamma_1,\cdots,\gamma_N\in H_T^\ast(W\mathbb{P}(m,n),\mathbb{C}), a_1,\cdots, a_N\in \mathbb{Z}_{\geq 0}$, define the orbifold descendent Gromov-Witten invariants as
\[\left< \tau_{a_1}(\gamma_1)\cdots \tau_{a_N}(\gamma_N) \right>_{g,N,d}^{W\mathbb{P}(m,n),T} = \dint_{[\bar{\mathcal{M}}_{g,N}(W\mathbb{P}(m,n),d)]^{\mathrm{vir}}} \dprod_{j=1}^N \psi_j^{a_j} \mathrm{ev}_j^\ast(\gamma_j) \in \mathbb{C}[\bold{w}].
\]

For $2g-2+M+N>0$ and given $\gamma_1,\cdots,\gamma_{M+N}\in H_T^\ast(W\mathbb{P}(m,n))$, define
\[
\begin{split}
& \left<  \dfrac{\gamma_1}{z_1-\psi_1},\cdots,\dfrac{\gamma_N}{z_N-\psi_N},\gamma_{N+1},\cdots,\gamma_{N+M} \right>_{g,M+N,d}^{W\mathbb{P}(m,n),T} \\
=& \dsum_{a_1,\cdots,a_N\geq 0} \left< \tau_{a_1}(\gamma_1)\cdots\tau_{a_N}(\gamma_N)\tau_0(\gamma_{N+1})\cdots\tau_0(\gamma_{N+M}) \right>_{g,M+N,d}^{W\mathbb{P}(m,n),T} \dprod_{j=1}^N z_j^{-a_j-1}.
\end{split}
\]
This is actually the formal expansion of $\frac{\gamma_i}{z_i-\psi_i}$ at $z_i^{-1}=0$.
In the unstable case $g=0,d=0,M+N=1 \;\mathrm{ or } \;2$, we define
\[
\begin{split}
\left< \frac{\gamma_1}{z_1-\psi_1}\right>_{0,1,0}^{W\mathbb{P}(m,n),T} & = z_1\int_{W\mathbb{P}(m,n)} \gamma_1,\\
\left< \frac{\gamma_1}{z_1-\psi_1},\gamma_2\right>_{0,2,0}^{W\mathbb{P}(m,n),T} & = \int_{W\mathbb{P}(m,n)} \gamma_1\cup\gamma_2,\\
\left< \frac{\gamma_1}{z_1-\psi_1}, \dfrac{\gamma_2}{z_2-\psi_2}\right>_{0,2,0}^{W\mathbb{P}(m,n),T} & = \dfrac1{z_1+z_2}\int_{W\mathbb{P}(m,n)} \gamma_1\cup\gamma_2.
\end{split}
\]

Let $\mathbf{t}=\sum t^iT_i$, where $\{T_i\}$ form a basis of $H_T^\ast(W\mathbb{P}(m,n),\mathbb{C})$.
Suppose that $2g-2+N+M>0$ or $N>0$. Given $\gamma_1,\cdots,\gamma_{M+N}\in H_T^\ast (W\mathbb{P}(m,n))$, we define
\[
\begin{split}
& \left<\!\left< \dfrac{\gamma_1}{z_1-\psi_1}, \cdots,\dfrac{\gamma_N}{z_N-\psi_N},\gamma_{N+1},\cdots,\gamma_{N+M}\right>\!\right>_{g,N+M}^{W\mathbb{P}(m,n),T} \\  = &  \dsum_{d\geq 0}\dsum_{\ell\geq 0} \dfrac{Q^d}{\ell!}  \left< \dfrac{\gamma_1}{z_1-\psi_1}, \cdots,\dfrac{\gamma_N}{z_N-\psi_N},\gamma_{N+1},\cdots,\gamma_{N+M},\underbrace{\mathbf{t},\cdots,\mathbf{t}}_{\ell\;\mathrm{ times }}\right>_{g,M+N+\ell,d}^{W\mathbb{P}(m,n),T}.
\end{split}
\]

Let
\[
\begin{split}
\mathbf{u_j}&=\dsum_{a\geq 0} (u_j)_a z^a,\\
F_{g,N}^{W\mathbb{P}(m,n),T}(\mathbf{u_1\cdots u_N,t})&=\dsum_{a_1,\cdots,a_N\geq 0}\left<\!\left<\tau_{a_1}((u_1)_{a_1}),\cdots,\tau_{a_N}((u_N)_{a_N})\right>\!\right>_{g,n}^{W\mathbb{P}(m,n),T}.
\end{split}
\]

For fixed $M,N\in\mathbb{Z}_{\geq 0}$, consider $\pi:\bar{\mathcal{M}}_{g,N+M}(W\mathbb{P}(m,n),d)\to \bar{\mathcal{M}}_{g,N}$ which forgets the target and the last $M$ marked points and stabilizes it.
Let $\bar{\mathbb{L}}_i$ be the pull-back of $\mathbb{L}_i$ along $\pi$.
Let $\bar{\psi}_i=\pi^\ast(\psi_i)=c_1(\bar{\mathbb{L}}_i)$ be the pull-back of $\psi$-classes in $\bar{M}_{g,N}$. Define
\[
\bar{F}_{g,N}^{W\mathbb{P}(m,n),T}(\mathbf{u_1\cdots u_N,t}) =  \dsum_{M,d,a_1,\cdots,a_N\geq 0} \dfrac{Q^d}{M!N!}\cdot\dint_{[\bar{\mathcal{M}}_{g,M+N}(W\mathbb{P}(m,n),d)]^{\mathrm{vir}}} \dprod_{j=1}^N \mathrm{ev}_j^\ast((u_j)_{a_j})\bar{\psi_j}^{a_j} \dprod_{i=1}^M \mathrm{ev}_{i+N}^\ast(\mathbf{t}).
\]
Let $F_{g,N}^{W\mathbb{P}(m,n),T}(\mathbf{u,t}),\bar{F}_{g,N}^{W\mathbb{P}(m,n),T}(\mathbf{u,t})$ be such that all $\mathbf{u_j}=\mathbf{u}$.

We define the total descendent potential and the ancestor potential of $W\mathbb{P}(m,n)$ as follows, where $\hbar$ is an arbitrary parameter:
\[
\begin{split}
&D^{W\mathbb{P}(m,n),T}(\mathbf{u})=\exp\big( \sum_{N,g}\hbar^{g-1}F_{g,N}^{W\mathbb{P}(m,n),T}(\mathbf{u,0})\big),\\
&A^{W\mathbb{P}(m,n),T}(\mathbf{u,t})=\exp\big(\sum _{N,g}\hbar^{g-1}\bar{F}_{g,N}^{W\mathbb{P}(m,n),T}(\mathbf{u,t}) \big).
\end{split}
\]

In fact, by Givental's formula \cite{Given}, we have
\[D^{W\mathbb{P}(m,n),T}(\mathbf{u})=\exp(F_1^{W\mathbb{P}(m,n),T}) \hat{\mathcal{S}}^{-1} A^{W\mathbb{P}(m,n),T}(\mathbf{u,t}),\] where $F_1^{W\mathbb{P}(m,n),T}=\sum_N F_{1,N}^{W\mathbb{P}(m,n),T}$.
We shall give an equivalent graph sum formula in 3.2.

\subsection{Quantum cohomology}
\

Following Iritani \cite{Iritani}, we first compute the non-equivariant quantum cohomology ring $QH^\ast(W\mathbb{P}(m,n))$ with all twisted sectors added.

First we give a generalized definition of toric varieties. A toric variety is constructed from the following data:
\begin{itemize}
  \item an r-dimensional algebraic torus $\mathbb{T}\cong (\mathbb{C^\ast})^r$. Set $\mathbb{N}=\mathrm{Hom}(\mathbb{C}^\ast,\mathbb{T})$;
  \item $M$ elements $D_i\in \mathbb{M} =\mathrm{Hom}(\mathbb{T},\mathbb{C}^\ast)$, such that $\mathbb{M}\otimes\mathbb{R}=\dsum\mathbb{R}\cdot D_i$;
  \item a vector $\eta\in \mathbb{M}\otimes\mathbb{R}$ (that defines the stability condition).
\end{itemize}

The elements $\{D_i\}$ define a homomorphism $\mathbb{T}\to(\mathbb{C}^\ast)^M$. Let $\mathbb{T}$ act on $\mathbb{C}^M$ via this homomorphism.
Set $\mathcal{A}=\left\{I:\sum_{i\in I} \mathbb{R}_{>0} D_i\ni \eta \right\}, \mathcal{U}_\eta=\mathbb{C}^M\backslash \displaystyle\bigcup$$_{_{I\notin\mathcal{A}}}\mathbb{C}^I$.
A toric orbifold is defined by the quotient stack $\mathcal{X}=[\mathcal{U}_\eta/\mathbb{T}]$. Note that by setting $\eta=0$ we obtain the original definition for toric varieties.
Let $I_0=\displaystyle\bigcap$$_{_{I\in\mathcal{A}}}I, \quad \mathcal{A}'=\left\{ I-I_0|I\in\mathcal{A}\right\}$.
Let $\overline{D}_i$ be the image of $D_i$ in $\mathbb{M}/\sum_{i\in I_0}\mathbb{Z}D_i\cong H_{CR}^2(\mathcal{X},\mathbb{Z})$.


Choose an integral basis $\{e_a\}$ of $\mathbb{M}$. 
Assume that some elements in $\{e_a\}$ generates $\sum_{i\in I_0} \mathbb{R}_{>0} D_i$.
Let $\bar{e}_a$ be the image of $e_a$ in $H^2(\mathcal{X},\mathbb{R})$.
Let $m_{ia}$ be a matrix such that $D_i=\sum_{a} m_{ia}e_a$, where $m_{ia}\in\mathbb{Z}$. Note that $\overline{D}_i=\sum_a m_{ia}\bar{e}_a$, and that $\overline{D}_i=0$ if $i\in I_0$.
Let $\mathbb{K}_{e\!f\!f}=\left\{ d\in\mathbb{N}\otimes\mathbb{Q} : \{i:\left<D_i,d\right>\in\mathbb{Z}_{\geq 0} \}\in\mathcal{A} \right\}$.
Let $q_a$ be coordinates on $\mathrm{Hom}(\mathbb{N},\mathbb{C}^\ast)$ with respect to the dual basis of $\{e_a \}$.

\begin{Def}
The $I$-function of $\mathcal{X}$ is an $H_{CR}^\ast (\mathcal{X})$-valued power series defined by
\[
I(q,z)=e^{(\sum_\alpha \bar{e}_a \log q_a)/z} \sum_{d\in\mathbb{K}_{e\!f\!f}} q^d \cdot \dfrac{\prod_{\nu,i:\left<D_i,d\right>\leq \nu<0}(\overline{D}_i+(\left<D_i,d\right>-\nu)z)}{\prod_{\nu,i:\left<D_i,d\right>> \nu\geq 0}(\overline{D}_i+(\left<D_i,d\right>-\nu)z)}\mathbf{1},
\]
where $q^d=\prod_a q_a^{d_a},d_a=\left<d,e_a\right>$.
\end{Def}

Let
\[\begin{split}
&\partial_a=q_a\frac{\partial}{\partial q_a}, \mathcal{D}_i=\sum_a m_{ia}\cdot z\partial_a,\\
&\mathcal{P}_d=q^d \prod_{i,\nu:-\left<D_i,d\right>>\nu\geq 0} (\mathcal{D}_i-\nu z)- \prod_{i,\nu:\left<D_i,d\right>>\nu\geq 0} (\mathcal{D}_i-\nu z).
\end{split}
\]
By direct calculations (Lemma 4.6 in \cite{Iritani}) we have:

\begin{lemma}
It holds that $\mathcal{P}_d(I(q,z))=0,\;\forall d\in\mathbb{N}$.
\end{lemma}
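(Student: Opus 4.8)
The plan is to verify the claimed annihilation coefficient by coefficient in the monomials $q^{d'}$, after understanding how the operators $\mathcal D_i$ act on the individual summands of $I(q,z)$. Write $I(q,z)=P\cdot\sum_{d'\in\mathbb K_{e\!f\!f}}q^{d'}C_{d'}(z)\mathbf 1$, where $P=e^{(\sum_a\bar e_a\log q_a)/z}$ and $C_{d'}(z)$ denotes the hypergeometric coefficient. First I would compute $\partial_a P=(\bar e_a/z)P$ and $\partial_a q^{d'}=\langle d',e_a\rangle q^{d'}$; since $C_{d'}(z)$ does not depend on $q$, the Leibniz rule gives $\mathcal D_i(Pq^{d'}C_{d'})=\bigl(\overline D_i+z\langle D_i,d'\rangle\bigr)Pq^{d'}C_{d'}$, using $\sum_a m_{ia}\bar e_a=\overline D_i$ and $\sum_a m_{ia}\langle d',e_a\rangle=\langle D_i,d'\rangle$. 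Consequently $\mathcal D_i-\nu z$ acts on the $d'$-summand by cup product with $\overline D_i+(\langle D_i,d'\rangle-\nu)z$, and the prefactor $q^d$ simply shifts $d'\mapsto d'+d$.

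With this, $\mathcal P_d(I)=0$ becomes, after reindexing the first term by $d'\mapsto d'-d$, the statement that for every $d'$ the coefficient of $Pq^{d'}$ agrees between the two products. Since everything factors over the index $i$, this reduces to the single-factor identity
\[
\frac{c_i(d')}{c_i(d'-d)}=\frac{\prod_{\nu:\,-\langle D_i,d\rangle>\nu\ge0}\bigl(\overline D_i+(\langle D_i,d'-d\rangle-\nu)z\bigr)}{\prod_{\nu:\,\langle D_i,d\rangle>\nu\ge0}\bigl(\overline D_i+(\langle D_i,d'\rangle-\nu)z\bigr)},
\]
where $c_i(d)$ is the $i$-th factor of $C_d(z)$. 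Because $d\in\mathbb N$ is integral, $\langle D_i,d\rangle\in\mathbb Z$, so the argument of $c_i$ shifts by an integer; I would prove the one-step recursion $c_i(\rho-1)/c_i(\rho)=\overline D_i+\rho z$ for every real $\rho$ by a short case analysis on the sign of $\rho$ (identifying the single surviving factor after telescoping the numerator or the denominator product), and then iterate $|\langle D_i,d\rangle|$ times to obtain the displayed identity. This is the computational heart of the lemma, but it is entirely mechanical.

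The main obstacle is matching the ranges of summation. The reindexing $d'\mapsto d'-d$ produces terms indexed by $d'$ with $d'\in\mathbb K_{e\!f\!f}$ but $d'-d\notin\mathbb K_{e\!f\!f}$ (or the reverse), which are present in only one of the two sums, and these must be shown to vanish. I would argue that each such boundary term carries, through the operator-product factors $\overline D_i+(\langle D_i,\cdot\rangle-\nu)z$ evaluated where the pairing is an integer, a product of classes $\overline D_i$ whose index set fails to lie in $\mathcal A$, i.e. a non-face. Such a product is zero in $H^\ast_{CR}(\mathcal X)$ by the Stanley--Reisner relations, which is precisely the content encoded by defining $\mathbb K_{e\!f\!f}$ through the condition $\{i:\langle D_i,d\rangle\in\mathbb Z_{\ge0}\}\in\mathcal A$. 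Once these boundary terms are cleared and the interior identity of the previous paragraph is in hand, the two products defining $\mathcal P_d$ agree term by term and the lemma follows; as the statement indicates, this is the direct calculation carried out in Lemma 4.6 of \cite{Iritani}.
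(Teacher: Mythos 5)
Your argument is correct and is essentially the direct calculation that the paper invokes by citing Lemma 4.6 of Iritani rather than writing out: the operators $\mathcal{D}_i$ act on each summand $Pq^{d'}C_{d'}$ by cup product with $\overline{D}_i+\langle D_i,d'\rangle z$, the hypergeometric factors satisfy the one-step telescoping recursion you state, and the boundary terms with $d'-d\notin\mathbb{K}_{e\!f\!f}$ vanish by the Stanley--Reisner relations. Since the paper supplies no proof beyond that citation, your sketch simply fills in the referenced computation along the same lines.
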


Let $\mathrm{Eff}_\mathcal{X}\subseteq H_2(X,\mathbb{Z})$ be the semigroup generated by effective stable maps.
For an arbitrary $\tau\in H_{CR}^\ast(\mathcal{X})$, let $\tau_{0,2}$ be the component of $\tau$ consisting of terms of degree 2, and $\tau'=\tau-\tau_{0,2}$.
Choose an arbitrary basis $\{H_a\}$ of $H_{CR}^\ast(\mathcal{X})$ as a $\mathbb{C}$-module, and $\{H^a\}$ its dual basis. Then $H^a$ could be regarded as in $H_{CR}^\ast(\mathcal{X})$ by applying the Poincare duality.

\begin{Def}
The $J$-function
\[J(\tau,z)=e^{\tau_{0,2}/z} \big(\mathbf{1}+\sum_{\stackrel{(d,\ell)\neq (0,0)}{d\in{\mathrm{Eff}_\mathcal{X}}}}\sum_a \dfrac1{\ell!}\left<\mathbf{1},\tau',\cdots,\tau',\frac{H_a}{z-\psi}\right>^X_{0,\ell+2,d} \cdot e^{\tau_{0,2},d}H^a \big).\]
\end{Def}

Let $I(q,z)=1+\frac{\tau(q)}z+o(z^{-1})$ be the expansion of $I$ with respect to $z$ at $z=\infty$. Here $\tau$ is called the mirror map.

\begin{thms}[Mirror Theorem]It holds that $I(q,z)=J(\tau(q),z)$.
\end{thms}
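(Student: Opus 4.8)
The plan is to deduce the identity from Givental's Lagrangian cone formalism, reading both $I$ and $J$ as families of points in the symplectic loop space $\mathcal{H}=H_{CR}^\ast(\mathcal{X})\otimes\mathbb{C}((z^{-1}))$. Recall that the genus-zero Gromov-Witten theory of $\mathcal{X}$ is encoded in the overruled Lagrangian cone $\mathcal{L}\subset\mathcal{H}$, and that the $J$-function has the defining property that $-zJ(\tau,-z)$ is a family of points of $\mathcal{L}$, parametrizing a slice of the cone as $\tau$ varies; moreover it is the \emph{unique} family on $\mathcal{L}$ whose expansion at $z=\infty$ has the normalized form $-z\mathbf{1}+\tau+O(z^{-1})$. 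The whole theorem therefore splits into two assertions: that $-zI(q,-z)$ lies on $\mathcal{L}$, and that its normalized form recovers exactly the mirror map $\tau(q)$ read from $I(q,z)=\mathbf{1}+\tau(q)/z+O(z^{-2})$.

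First I would prove that $-zI(q,-z)\in\mathcal{L}$. The essential input is the preceding lemma, the GKZ annihilation $\mathcal{P}_d(I)=0$ for all $d$, which says that the components of $I$ generate a $\mathcal{D}$-module cut out by the same operators $\mathcal{P}_d$ that define the quantum $\mathcal{D}$-module of $\mathcal{X}$. Concretely, the prefactor $e^{(\sum_a\bar{e}_a\log q_a)/z}$ converts the logarithmic derivatives $z\partial_a$ into the operators $\mathcal{D}_i$ acting by small quantum multiplication modulo $z$, so the $\mathbb{C}[z]$-span of $I$ together with its iterated derivatives $z\partial_{a_1}\cdots z\partial_{a_k}I$ is closed under the tangent-space conditions defining $\mathcal{L}$. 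I would show this span is a tangent space $zT$ to $\mathcal{L}$ by invoking the Birkhoff-factorization characterization of the cone: a family lies on $\mathcal{L}$ if and only if, after factoring out its $z>0$ part, it is a $\mathbb{C}[z]$-linear combination of a fundamental solution of the quantum connection at the point $\tau=\tau(q)$. This is the step where one must identify the GKZ $\mathcal{D}$-module with Givental's cone.

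With membership established, the identity is forced by the uniqueness characterization. Both $-zI(q,-z)$ and $-zJ(\tau(q),-z)$ are points of $\mathcal{L}$ whose $z=\infty$ expansions begin with $-z\mathbf{1}+\tau(q)$, and near such a point $\mathcal{L}$ is the graph of an exact one-form, so its $z\le 0$ part is determined by the pair $(-z\mathbf{1},\tau(q))$. Matching these leading terms then yields $I(q,z)=J(\tau(q),z)$, the reading-off of $\tau(q)$ from the $z^{-1}$-coefficient being purely formal.

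I expect the main obstacle to be the membership step, and in particular the two twisted sectors of $W\mathbb{P}(m,n)$. Because $\langle D_i,d\rangle$ takes fractional values, the hypergeometric factors in $I$ run over a delicate index set, and the $\mathcal{D}$-module they generate must be matched, sector by sector, with the Chen-Ruan quantum product on $H_{CR}^\ast(\mathcal{X})=\mathbb{C}[X,\bar{X}]/\langle X\bar{X},\,mX^m-n\bar{X}^n\rangle$. Verifying that the GKZ ideal is \emph{exactly} the ideal of the quantum $\mathcal{D}$-module, rather than merely contained in it, so that no spurious relations appear and no generators are missing, is where the real content lies; the uniqueness step and the extraction of $\tau(q)$ are then formal consequences.
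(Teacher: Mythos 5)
The paper offers no proof of this theorem at all: it is imported from the literature, with the weighted projective case attributed to \cite{Coates1} and the general toric-stack case to \cite{Coates2}. So your proposal has to be measured against those proofs, and its overall architecture is indeed the right one: the two-step scheme --- (i) show that $-zI(q,-z)$ lies on Givental's Lagrangian cone $\mathcal{L}$, (ii) invoke the fact that a family of points of $\mathcal{L}$ with expansion $-z\mathbf{1}+\tau+O(z^{-1})$ is unique and equals $-zJ(\tau,-z)$ --- is exactly the framework of \cite{Coates2}. Your step (ii), including the purely formal extraction of the mirror map $\tau(q)$ from the $z^{-1}$ coefficient, is stated correctly.

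The genuine gap is in step (i), and it is not a technicality. You propose to deduce cone membership from the GKZ annihilation $\mathcal{P}_d(I)=0$ (the paper's preceding lemma) together with a Birkhoff-factorization characterization of $\mathcal{L}$ in terms of fundamental solutions of the quantum connection at $\tau(q)$. But that characterization presupposes knowledge of the quantum product at $\tau(q)$, which is precisely what the mirror theorem is meant to compute; and the assertion that the $\mathcal{D}$-module generated by $I$ coincides with the quantum $\mathcal{D}$-module is an equivalent reformulation of the theorem, not an available input --- as you yourself concede when you say this identification is ``where the real content lies.'' Note also that the GKZ equations alone cannot single out the cone: the solution space of the system is a $\mathbb{C}((z^{-1}))$-vector space (coefficients constant in $q$), whereas $\mathcal{L}$ is a nonlinear cone whose ruling subspaces $zT$ are only $\mathbb{C}[z]$-modules, so most GKZ solutions do not lie on $\mathcal{L}$. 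The actual proof in \cite{Coates2} (following Brown's method for toric bundles) avoids this circularity entirely: points of $\mathcal{L}$ are characterized intrinsically by torus localization --- prescribed pole structure of the restrictions to the $T$-fixed points together with explicit recursion relations among fixed-point contributions --- and one then verifies these conditions directly on the hypergeometric factors of $I$, fractional exponents $\left<D_i,d\right>$ and twisted sectors included; this is also where the extended presentation with $m+n$ rays (which the paper uses) enters, so that the twisted sectors of $H_{CR}^\ast(W\mathbb{P}(m,n))$ are reached. Without replacing your membership argument by an argument of this kind, or by the direct computation of \cite{Coates1} for weighted projective spaces, the proposal does not close.
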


Mirror theorem for weighted projective space was proved in \cite{Coates1}. General case was proved in \cite{Coates2}.

In our case, by taking $e_a=(0,\cdots,0,1,0,\cdots,0)$, where $1$ is at the $a$-th entry,
$\eta=\sum_{j=-n+1}^{m-1}e_j$ and
\[\begin{array}{lll}
D_{-m}&=\sum_{j=0}^{n-1}(n-j)e_{-j},&\\
D_{-m+\ell}&=ne_\ell, & \ell=1,\cdots,m-1;\\
D_{n-\ell}&=me_{-\ell},& \ell=1,\cdots,n-1;\\
D_n&=\sum_{j=0}^{m-1}(m-j)e_j,&\\
\end{array}
\]
we can also obtain $W\mathbb{P}(m,n)$.
$J(\tau,z)=\sum_a  \left<\!\left<1,\frac{H_a}{z-\psi} \right>\!\right>_{0,2} H^a$ follows directly from Definition 2.2.

On the other hand, consider the Frobenius algebra $V=QH^\ast(W\mathbb{P}(m,n))\cong\mathrm{Jac}(W\mathbb{P}(m,n))$.
Let $H_\alpha$ (which coincides with the $H_a$ above in our case) be its basis as a $\mathbb{C}$ vector space. We identify $V$ and its tangent space $T_pV$ for $p$ a semisimple point on $V$.
Define quantum connection $\nabla_\alpha=z\partial_\alpha -H_\alpha\ast$.
Consider differential equations system: (Quantum Differential Equation, QDE)
\[
\nabla_\alpha h=0,\quad \alpha=1,\cdots,\dim V.
\]
Let $S_\sigma=\sum_a \left<\!\left<H_a,\frac{H_\sigma}{z-\psi} \right>\!\right> H^a$. This forms a set of fundamental solutions to the QDE.

Let $(\alpha,\beta)$ be the Poincare paring $\int_{W\mathbb{P}(m,n)} \alpha \cup \beta$. Then
$(J,H_\sigma) = \left<\!\left<1,\frac{H_\sigma}{z-\psi} \right>\!\right>_{0,2}^{W\mathbb{P}(m,n)} = (S_\sigma,1)$
by the duality of $H_a$ and $H^a$.
Inducting on $k$ with the Leibniz rule, we have
$\big( z\frac{\partial}{\partial t_{i_1}} \cdots z\frac{\partial}{\partial t_{i_k}} J, H_\sigma \big) = (H_{i_1}\cdots H_{i_k}S_\sigma,1)$.

By $\mathcal{P}_d I=0, I=J$, we know that for arbitrary $\sigma$,
\[
\begin{split}
0 & = (\mathcal{P}_d J, H_\sigma) \\
& = \big( \big( q^d\displaystyle\prod_{i,\nu:-\left<D_i,d\right>>\nu\geq 0}(\dsum_a m_{ia}\cdot z\partial_a -\nu z) - \displaystyle\prod_{i,\nu:\left<D_i,d\right>>\nu\geq 0}(\dsum_a m_{ia}\cdot z\partial_a -\nu z)
\big)J,H_\sigma\big) \\
& =\big( \big( q^d\displaystyle\prod_{i,\nu:-\left<D_i,d\right>>\nu\geq 0}(\dsum_a m_{ia}\cdot H_a -\nu z) - \displaystyle\prod_{i,\nu:\left<D_i,d\right>>\nu\geq 0}(\dsum_a m_{ia}\cdot H_a -\nu z) \big)S_\sigma,1\big).
\end{split}
\]

This implies for all $d$,
\[q^d\prod_{i,\left<D_i,d\right>\leq 0}(\sum_a m_{ia}\cdot H_a)^{-\left<D_i,d\right>} - \prod_{i,\left<D_i,d\right>\geq 0}(\sum_a m_{ia}\cdot H_a)^{\left<D_i,d\right>} =0.\]

Take $d=e_a$, let $\bar{X}=(\sum_{j=0}^{m-1}(m-j)H_j)^{\frac1n}$, $X^m=(\sum_{j=0}^{n-1}(n-j)H_{-j})^{\frac1m}$, then the above relations give
$H_\ell  = \dfrac1n q_\ell^{\frac1n}\bar{X}^{-(m-\ell)},\;
\bar{X} X  =q_0^{\frac1{mn}},\;
H_{-\ell}  = \dfrac1m q_{-\ell}^{\frac1m}X^{-(n-\ell)}$.

So we obtain
\[\begin{split}
& \mathbb{C}[H_{-n+1},\cdots,H_{m-1}]/\left<H_\ell - \dfrac1n q_\ell^{\frac1n}\bar{X}^{-(m-\ell)},
\bar{X} X -q_0^{\frac1{mn}}, H_{-\ell} - \dfrac1m q_{-\ell}^{\frac1m}X^{-(n-\ell)} \right> \\
\cong & \mathbb{C}[X,\bar{X}]/\left<X\bar{X}-q_0^{\frac1{mn}},mX^m+ \dsum_{\ell=1}^{m-1}(m-\ell)q_\ell^{\frac1n}q_0^{-\frac{m-\ell}{mn}}X^{(m-\ell)} -n\bar{X}^n-\dsum_{\ell=1}^{n-1}(n-\ell)q_{-\ell}^{\frac1m}q_0^{\frac{n-\ell}{mn}}\bar{X}^{(n-\ell)}\right>.
\end{split}
\]
By dimension argument, this is isomorphic to $QH^\ast(W\mathbb{P}(m,n))$.

\subsection{Equivariant quantum cohomology}
\

In our case, we may recover the equivariant quantum cohomology ring from the equivariant cohomology ring and the quantum cohomology ring.
As a $\mathbb{C}[\bold{w}]$-module, $QH^{\ast}_T(W\mathbb{P}(m,n),\mathbb{C}) \cong H^{\ast}_T(W\mathbb{P}(m,n),\mathbb{C})$. The product structure is given by $(\gamma_1 \ast \gamma_2, \gamma_3) = \left< \!\left<\gamma_1,\gamma_2,\gamma_3\right>\!\right>^{W\mathbb{P}(m,n),T}_{0,3}$.
With a slight abuse of notations, let $X,\bar{X}$ be the image of $X$ and $\bar{X}$ in $QH_T^\ast(W\mathbb{P}(m,n))$ in the $\mathbb{C}$-vector space isomorphism $QH^\ast(W\mathbb{P}(m,n))\stackrel{\sim}{\to}QH_T^\ast(W\mathbb{P}(m,n))$ (after regarding equivariant parameters as complex numbers).

\begin{prop}
It holds that $QH_T^\ast(W\mathbb{P}(m,n))\cong \mathbb{C}[X,\bar{X},p]\{q\}/I(\cong \mathbb{C}[X,\bar{X}]/I)$,

\noindent where
$I=\Bigg< X\bar{X}-q_0^{\frac1{mn}}, mX^m-n\bar{X}^n-\big(- p+\dsum_{\ell=1}^{n-1}(n-\ell)q_{-\ell}^{\frac1m}q_0^{\frac{n-\ell}{mn}}\bar{X}  -\dsum_{\ell=1}^{m-1}(m-\ell)q_\ell^{\frac1n}q_0^{-\frac{m-\ell}{mn}}X^{(m-\ell)}\big)\Bigg>$.
\end{prop}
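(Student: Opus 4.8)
The plan is to realize $QH_T^\ast(W\mathbb{P}(m,n))$ as a quotient of a free algebra on $X,\bar X$ and to pin down the relations by interpolating between two specializations that are already in hand. Since we are given $QH_T^\ast\cong H_T^\ast$ as $\mathbb{C}[\mathbf{w}]$-modules and that $X,\bar X$ generate $H_T^\ast$ as an algebra, their images generate $QH_T^\ast$ as a $\mathbb{C}[\mathbf{w}]\{q\}$-algebra, giving a surjection $\mathbb{C}[\mathbf{w}]\{q\}[X,\bar X]\twoheadrightarrow QH_T^\ast$. It then suffices to (i) check that the two generators of $I$ are killed by the quantum product, and (ii) verify by a rank count over $\mathbb{C}[\mathbf{w}]\{q\}$ that they span the whole kernel.

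For (i) I would argue by deformation rather than re-proving an equivariant mirror theorem. The quantum product, with structure constants $(\gamma_1\ast\gamma_2,\gamma_3)=\langle\!\langle\gamma_1,\gamma_2,\gamma_3\rangle\!\rangle^{W\mathbb{P}(m,n),T}_{0,3}$, is a deformation of the classical equivariant cup product with two known specializations: at $q\to 0$ it is the classical ring $H_T^\ast$ with relations $X\bar X=0$ and $mX^m-n\bar X^n+p=0$ (where $p=\sum_\ell \ell\mathbf{w}_\ell-\sum_\ell \ell\mathbf{w}_{-\ell}$ is the $\mathbf{w}$-linear degree-two class), while at $\mathbf{w}\to 0$, hence $p\to 0$, it is the non-equivariant ring of \S2.3. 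The assertion of the proposition is that these glue additively: the first relation receives no equivariant correction, and the second is the non-equivariant relation of \S2.3 with the single classical term $+p$ restored, carrying no term that is simultaneously of positive $q$-degree and positive $\mathbf{w}$-degree.

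The main obstacle is to justify this additivity, i.e.\ to exclude mixed corrections. I would control it through the cohomological grading: give each $\mathbf{w}_i$ degree $2$, give $X,\bar X$ their age-shifted Chen--Ruan degrees, and give the fractional powers of the quantum variables the degrees forced by the dimension axiom, so that every three-point structure constant is homogeneous and each relation is homogeneous of a definite degree. The $\mathbf{w}$-free part of each relation is already fixed by the non-equivariant computation and the $q$-free part by the classical presentation; homogeneity then leaves no slot for a term of positive degree in both $\mathbf{w}$ and $q$, and the degree-two relation can absorb only the lone $\mathbf{w}$-linear, $q$-free term $p$. The same grading shows $X\ast\bar X$ can differ from its non-equivariant value $q_0^{1/mn}\mathbf 1$ only in the same top degree, where no $\mathbf{w}$-dependent class is available, so the first relation stays uncorrected. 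Any residual coefficients not fixed by grading I would determine by evaluating the relevant genus-$0$ three-point equivariant invariants through the string and divisor equations, as was done for $\mathbb{P}^1$ in \cite{Fang1}.

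For (ii) I would check that the quotient by the two relations is free of rank $m+n$ over $\mathbb{C}[\mathbf{w}]\{q\}$: the relation $X\bar X=q_0^{1/mn}$ collapses every mixed monomial to a power of $q_0$ times a single power of $X$ or $\bar X$, and the degree-$m$ and degree-$n$ relations bound those powers, leaving $1,X,\dots,X^{m-1},\bar X,\dots,\bar X^{n-1}$ together with the top class as a spanning set of size $m+n$. Since $\mathrm{rank}_{\mathbb{C}[\mathbf{w}]}H_T^\ast=\dim H_{CR}^\ast(W\mathbb{P}(m,n))=m+n$, the surjection of (i) must be an isomorphism. As a cross-check one could instead rerun the \S2.3 argument equivariantly, replacing each $\overline D_i$ in $I(q,z)$ and in $\mathcal P_d$ by the equivariant divisor class $\overline D_i+\mathbf{w}_i$ and reading the relations off from $(\mathcal P_d J,H_\sigma)=0$; the deformation route is preferable because it avoids re-establishing an equivariant mirror theorem.
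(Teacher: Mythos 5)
Your overall strategy---pin down each relation from the two specializations $q\to 0$ (the classical equivariant ring) and $\mathbf{w}\to 0$ (the non-equivariant quantum ring of \S 2.3), exclude mixed corrections by homogeneity together with positivity of the $q$-degrees, and finish with a rank-$(m+n)$ count---is exactly the paper's proof for the second relation and for its closing ``dimension argument.'' The one step where you genuinely deviate is the relation $X\ast\bar X=q_0^{1/mn}$, and there your argument has a gap. You assert that $X\ast\bar X$ can be corrected only in its own degree $\deg X+\deg\bar X=\tfrac2m+\tfrac2n$, ``where no $\mathbf{w}$-dependent class is available.'' That is correct when $m,n\ge 2$, since then $\tfrac2m+\tfrac2n<2=\deg \mathbf{w}_i$; but the paper only assumes $\gcd(m,n)=1$, so the teardrop cases $\min(m,n)=1$ are in scope, and there the claim fails. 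For $m=1$, $n\ge 2$ one has $\deg(X\bar X)=2+\tfrac2n$, while homogeneity of the second relation forces the quantum monomial $q_{-1}^{1/m}q_0^{(n-1)/(mn)}$ to have degree $\tfrac2n$; hence $\mathbf{w}_i\cdot q_{-1}^{1/m}q_0^{(n-1)/(mn)}\cdot\mathbf 1$ sits in exactly the right degree, has positive degree in both $\mathbf{w}$ and $q$, and therefore survives both specializations. Your grading argument cannot rule it out.

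The paper closes this case by a different and more robust observation: writing $X\ast\bar X=\sum_{\alpha,\beta}\langle\!\langle X,\bar X,\gamma_\alpha\rangle\!\rangle_{0,3,\beta}\,\gamma^\alpha q^\beta$ with $\gamma_\alpha$ a basis of $H^\ast_{CR}$, compactness of $W\mathbb{P}(m,n)$ forces every structure constant with purely non-equivariant insertions to be an actual number rather than a polynomial in $\mathbf{w}$, so $X\ast\bar X$ contains no equivariant parameters at all and must equal its non-equivariant limit $q_0^{1/mn}$. To make your first relation airtight you would need to import this compactness argument (or explicitly evaluate the potentially offending three-point invariant, as your final sentence vaguely allows), rather than rely on the grading alone. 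The remainder of your proposal, including the spanning-set and rank count in (ii), is sound and coincides with what the paper does.
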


\begin{proof}
We prove by degree arguments.

First we calculate $X\ast \bar{X}$, where $\ast$ stands for the multiplication in $QH_T^\ast(W\mathbb{P}(m,n))$. We have $X\ast\bar{X}=\sum_{\alpha,\beta} \left<\!\left< X,\bar{X},\gamma_\alpha\right>\!\right>_{0,3,\beta}^{W\mathbb{P}(m,n),T} \gamma^\alpha q^\beta$, where $\gamma_\alpha$ is the basis of $H_{CR}^\ast (W\mathbb{P}(m,n))$, and $\gamma^\alpha$ is its dual basis in $H_T^\ast(W\mathbb{P}(m,n))$. By the compactness of $W\mathbb{P}(m,n)$, we know that $\gamma_\alpha$ can be chosen in $H_{CR}^\ast(W\mathbb{P}(m,n))$. Hence $X\ast \bar{X}$ does not contain equivariant parameters. So by taking non-equivariant limit, it holds that $ X\ast \bar{X}=X\bar{X}=\bold{q_0}^{\frac1{mn}}$.

Next we calculate  $mX^{\ast m}-n\bar{X}^{\ast n}$, where $\alpha^{\ast k}$ means the $k$-th power of $\alpha$ in $QH_T^\ast(W\mathbb{P}(m,n))$ for $k\in\mathbb{Z}_{\geq 0}$ and $\alpha\in QH_T^\ast(W\mathbb{P}(m,n))$. Since $mX^{\ast m}-n\bar{X}^{\ast n}\in QH_T^\ast(W\mathbb{P}(m,n))$, we can write it as
\[mX^{\ast m}-n\bar{X}^{\ast n} = \varphi\big(X,\bar{X},\bold{p}=\dsum_{\ell=1}^m \ell w_\ell - \dsum_{\ell=1}^n \ell w_{-\ell}, \bold{q}=(\bold{q_{-n+1}},\cdots,\bold{q_{m-1}})\big),
\]
where $\varphi$ is a polynomial in $X,\bar{X},\bold{p},\bold{q_0}^{\frac1{mn}},\big( \bold{q_\ell q_0}^{-\frac{m-\ell}m}\big)^{\frac1n}, \big( \bold{q_{-\ell} q_0}^{\frac{n-\ell}n}\big)^{\frac1m}$. By taking large radius limit and non-equivariant limit, we have
\[
\begin{split}
&\varphi(X,\bar{X},\bold{p},0)=-\bold{p},\\
&\varphi(X,\bar{X},0,\bold{q})=\dsum_{\ell=1}^{n-1}(n-\ell)\bold{q_{-\ell}}^{\frac1m}\bold{q_0}^{\frac{n-\ell}{mn}}\bar{X}^{\ast(n-\ell)} -
\dsum_{\ell=1}^{m-1}(m-\ell)\bold{q_\ell}^{\frac1n}\bold{q_0}^{-\frac{m-\ell}{mn}}X^{\ast(m-\ell)}.
\end{split}
\]

Since the degree of $\bold{q_0}^{\frac1{mn}},\big( \bold{q_\ell q_0}^{-\frac{m-\ell}m}\big)^{\frac1n}, \big( \bold{q_{-\ell} q_0}^{\frac{n-\ell}n}\big)^{\frac1m}$ are all positive, and $\deg \bold{p}=\deg \varphi=2$, there are no cross terms of $\bold{p}$ and $\bold{q}$ in $\varphi$. Hence we obtain
\[\varphi(X,\bar{X},\bold{p},\bold{q})= -\bold{p}+\dsum_{\ell=1}^{n-1}(n-\ell)\bold{q_{-\ell}}^{\frac1m}\bold{q_0}^{\frac{n-\ell}{mn}}\bar{X}^{\ast(n-\ell)} -\dsum_{\ell=1}^{m-1}(m-\ell)\bold{q_\ell}^{\frac1n}\bold{q_0}^{-\frac{m-\ell}{mn}}X^{\ast(m-\ell)}.\]

By dimension argument, we drop the superscript $\ast$ and obtain the desired result.
\end{proof}

Let $X^{-1}=q_0^{_{\frac{-1}{mn}}}\bar{X}$.
It is easy to see that $\{X^{m-1},\cdots,X^{-n}\}$ is a flat basis of $QH^\ast_T(W\mathbb{P}(m,n))$.
Inducting on $t$ and applying
\[
X^{m+t}=\dfrac1m X^t\big( nq_0^{\frac1m}X^{-n}+\dsum_{\ell=1}^{n-1}(n-\ell)q_{-\ell}^{\frac1m} X^{-(n-\ell)} -
\dsum_{\ell=1}^{m-1}(m-\ell)q_{\ell}^{\frac1n}q_0^{-\frac{m-\ell}{mn}} X^{(m-\ell)}-p \big),
\]
we know that $\{X^{m+t},\cdots,X^{-n+t+1} \}$ are equivalent (flat) basis for $QH^\ast_T(W\mathbb{P}(m,n))$.
Hence we have $X^{m+n-1},\cdots,X^0 (=1)$ as a flat basis for $QH^\ast_T(W\mathbb{P}(m,n))$.
So $\{X^0,\cdots,X^{m+n-1}\}$ is equivalent flat basis for $QH^\ast_T(W\mathbb{P}(m,n))$, namely, $\{X^0,\cdots,X^{m+n-1}\}$ is a flat basis for $QH^\ast_T(W\mathbb{P}(m,n))$.

\begin{prop}
$QH^\ast_T(W\mathbb{P}(m,n))\cong\mathrm{Jac}(W_T)$ as Frobenius algebras.
\end{prop}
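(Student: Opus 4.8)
My proof proposal proceeds in two stages: first realize an isomorphism of $\mathbb{C}[w]$-algebras, and then upgrade it to an isomorphism of Frobenius algebras by matching trace functionals.

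First I would extract the algebra isomorphism from Proposition 2.1. Since $X\bar{X}=q_0^{\frac1{mn}}$ is a unit (over the localization where $q_0$ is invertible), $X$ is invertible and I may eliminate $\bar{X}=q_0^{\frac1{mn}}X^{-1}$, presenting $QH_T^\ast(W\mathbb{P}(m,n))\cong \mathbb{C}[w][X^{\pm 1}]/\langle R\rangle$, where $R$ is the image of the second generator of $I$. I then define $\Phi$ by $X\mapsto Y$ (equivalently $\bar{X}\mapsto q_0^{\frac1{mn}}Y^{-1}$) together with the induced identification of the Kähler and equivariant parameters $(q_\bullet,p)\leftrightarrow(\tilde q_\bullet,\tilde p)$, and check by direct substitution that $R$ maps to a unit multiple of $\partial W_T/\partial y$. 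Concretely, $-n\bar X^n\mapsto -nq_0^{\frac1m}Y^{-n}$, $mX^m\mapsto mY^m$, and the intermediate monomials match the coefficients $\ell\tilde q_\ell$, $-\ell\tilde q_{-\ell}$ after absorbing the appropriate powers of $q_0$, while $-p\leftrightarrow\tilde p$. Both source and target are a Laurent ring modulo a single relation whose leading coefficient ($m$) and, after clearing $Y^n$, constant term ($\propto q_0^{\frac1m}$) are units, so each is free of rank $m+n$ over $\mathbb{C}[w]$; this matches the flat basis $\{X^0,\dots,X^{m+n-1}\}$ established in the paragraph preceding the proposition, and $\Phi$ is an algebra isomorphism.

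Next I would reduce Frobenius-compatibility to a statement about traces. A nondegenerate symmetric Frobenius form on a fixed finite commutative algebra is equivalent data to its trace $\theta(\cdot)=(\cdot,1)$, through $(a,b)=\theta(ab)$; hence the algebra isomorphism $\Phi$ is automatically an isomorphism of Frobenius algebras the moment it intertwines the two traces, i.e. $\theta_{\mathrm{Jac}}\circ\Phi=\theta_{QH}$, where $\theta_{QH}(\gamma)=(\gamma,1)=\int_{W\mathbb{P}(m,n)}\gamma$ and $\theta_{\mathrm{Jac}}(f)=\sum_\alpha \mathrm{Res}_{p^\alpha}\frac{f}{\partial W_T/\partial y}\frac{dY}{Y}$. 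To evaluate $\theta_{\mathrm{Jac}}$ I would invoke the global residue theorem on the $Y$-line: the residues of $\frac{f}{\partial W_T/\partial y}\frac{dY}{Y}$ over all poles sum to zero, so $\theta_{\mathrm{Jac}}(f)=-\mathrm{Res}_0-\mathrm{Res}_\infty$. Near $Y=0$ the denominator is dominated by $-n\tilde q_{-n}Y^{-n}$ and near $Y=\infty$ by $mY^m$, so $\theta_{\mathrm{Jac}}$ reduces to an explicit functional extracting the coefficients of $Y^{-n}$ and $Y^{m}$ in the monomial basis, weighted by $\frac1n$ and $\frac1m$ respectively. These weights are exactly the orders of the two stacky fixed points of $W\mathbb{P}(m,n)$ (with stabilizers $\mathbb{Z}/n$ and $\mathbb{Z}/m$), so the residue trace matches the equivariant integration $\theta_{QH}$ term by term on the basis; an equivalent route is to pass to a semisimple point, observe that $\Phi$ carries the idempotents localized at the critical points to the canonical idempotents of $QH_T^\ast$, and compare $(\hat e_\alpha,\hat e_\alpha)=1/W_T''(p^\alpha)$ with the Poincaré normalization in canonical coordinates.

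I expect the algebra isomorphism to be essentially forced by Proposition 2.1, so the genuine content — and the main obstacle — is the normalization bookkeeping in the final paragraph: reconciling the logarithmic residue $dY/Y$, the weights $m,n$, and the fractional powers of $q_0$ so that the residue pairing reproduces the equivariant Poincaré pairing exactly, rather than merely up to an undetermined scalar.
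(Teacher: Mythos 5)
Your proposal is correct and its first half (the algebra isomorphism via Proposition 2.1, elimination of $\bar X$, and the identification of parameters) coincides with what the paper does. Where you genuinely diverge is in how the pairing is matched. The paper does \emph{not} compute either trace in absolute terms: it takes the non-equivariant limit $w_i,\tilde w_i\to 0$, invokes the non-equivariant mirror theorem to get $QH^\ast(W\mathbb{P}(m,n))\cong\mathrm{Jac}(W)$ as Frobenius algebras, and then shows that the equivariant parameters affect neither side --- on the A-side by the localization identity $(g(X),1)=\int_{\wedge\mathcal{X}}g(X)=\int_{\wedge\mathcal{X}^T}g(X)=(g(X),1)_T$, and on the B-side by writing $(g(Y),1)=-\mathrm{Res}_\infty\frac{g(Y)}{f(Y)}\frac{dY}{Y}$ as a contour-integral limit and checking that the extra term $\tilde p$ in $f$ contributes nothing as $R\to\infty$, for representatives $g$ in the window $\bigoplus_{j=-n+1}^{m}\mathbb{C}\,Y^j$. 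You instead pin down both traces absolutely: the global residue theorem reduces $\theta_{\mathrm{Jac}}$ to $-\mathrm{Res}_0-\mathrm{Res}_\infty$, which on the monomial window picks out only the top coefficient with weight $\frac1m$ (resp.\ $\frac{1}{n\tilde q_{-n}}$ at the other end), and you match this against $\int_T X^m=\frac1m$ computed from the stacky point $B\mathbb{Z}_m$. The two arguments share the residue-at-infinity mechanism, and your version has the advantage of making the $\tilde p$-independence automatic (the residues at $0$ and $\infty$ see only the leading and trailing coefficients of $\partial W_T/\partial y$) and of not using the mirror theorem as a black box for the normalization; the cost is that you must justify the equivariant integrals $\int_T X^j$ on the whole window, which is where a degree argument (as in the paper's localization step) is still needed to rule out $w$-dependent corrections for $j<m$. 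You correctly identify the $q_0$-power bookkeeping as the remaining labor; with the paper's identification $\tilde q_{-n}=q_0^{1/m}$ the two end-point normalizations are consistent modulo the relation, so your outline closes.
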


\begin{proof}
Note that this could be proved by applying equivariant mirror theorem \cite{Coates2} or \cite{Fangnew}. However we include a proof which recovers the pairing from its non-equivariant limit as usual.

We identify $\tilde{q}_\ell$ with $q_{m-\ell}^{\frac1{n}}q_0^{-\frac{\ell}{mn}}$ (for $\ell=1,\cdots,m-1$), $\tilde{q}{-_\ell}$ with $q_{-n+\ell}^{\frac1{m}}$ (for $\ell=1,\cdots,n$), $p$ with $\tilde{p}$, and $Y$ with $X$.

Taking non-equivariant limit and applying mirror theorem, we have the isomorphism of non-equivariant limits $QH^\ast(W\mathbb{P}(m,n))\cong\mathrm{Jac}(W)$ as Frobenius algebras. Here the non-equivariant limit is given by $w_i \rightarrow 0$ and $\tilde{w}_i \rightarrow 0$. We now prove that $w_i$ and $\tilde{w_i}$ affect neither the residue pairing nor the Poincare pairing with $1$, which directly leads to the result.

Let $(\cdot,\cdot)$ denote the non-equivariant pairing, $(\cdot,\cdot)_T$ denote the equivariant pairing. For an representative element $g(t) \in \oplus_{j=-n+1}^{m}\mathbb{C}t^j$, we have the following results:

\noindent On the A-side,
\[(g(X),1)=\dint_{\wedge\mathcal{X}}g(X)=\dint_{\wedge\mathcal{X}^T}g(X)=(g(X),1)_T;\]
On the B-side, let $f(Y)=\partial W_T/\partial y$. By the residue formula,
\[(g(Y),1)=-Res_{\infty} \dfrac{g(Y)}{f(Y)}\dfrac{dY}{Y}=\dfrac{1}{2\pi i}\lim _{R \rightarrow \infty} \dint_0^{2\pi}\dfrac{g(Re^{i\theta})}{f(Re^{i\theta})}d\theta.\]
Hence we have
\[(g(Y),1)-(g(Y),1)_T=\dfrac{1}{2\pi i}\lim _{R \rightarrow \infty} \dint_0^{2\pi}\dfrac{g(Re^{i\theta})\cdot pR^ne^{in\theta}}{f(Re^{i\theta})(f(Re^{i\theta})+pR^ne^{in\theta})}d\theta=0.\]
And as a result $(g_1(Y),g_2(Y))_T=(g_1(X),g_2(X))_T$.
\end{proof}

\subsection{Basis}
\

Take $z_i(i=0,\cdots,m+n-1)$ to be the roots of $mz^m +  \sum_{\ell=1}^{m-1}\ell \tilde{q}_\ell z^\ell - \sum_{\ell=1}^{n} \ell \tilde{q}_{-\ell} z^{-\ell} +\tilde{p}=0$ with respect to $z$.
Assume that $z_i$ are distinct.
Let $\phi_i=\prod_{j\neq i}\frac{X-z_j}{z_i-z_j}$. Then $\phi_i$ is a canonical basis, i.e., $\phi_i\cdot\phi_j=\delta_{ij}\phi_i$.

\begin{lemma}
Let $z_i$ be all roots of $f(z)$. Assume $z_i$'s are distinct. Then $\phi_j=\prod_{i\neq j}\frac{z-z_i}{z_j-z_i}$ is a representative of the canonical basis of $\mathbb{C}[z]/\left<f(z)\right>$.
\end{lemma}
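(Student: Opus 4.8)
The plan is to identify $\mathbb{C}[z]/\langle f(z)\rangle$ with an algebra of functions on the root set via evaluation, and then to recognize the $\phi_j$ as the indicator functions of the individual roots. Write $d=\deg f$ and let $z_0,\dots,z_{d-1}$ be the (distinct) roots, so that $f(z)=c\prod_{i}(z-z_i)$ for some $c\neq 0$. Since the roots are distinct, a polynomial $g$ lies in $\langle f\rangle$ if and only if it is divisible by each of the pairwise coprime factors $(z-z_i)$, i.e. if and only if $g(z_i)=0$ for every $i$. Consequently the evaluation map $\mathrm{ev}\colon \mathbb{C}[z]/\langle f\rangle \to \mathbb{C}^{d}$, $[g]\mapsto (g(z_0),\dots,g(z_{d-1}))$, is a well-defined injective homomorphism of $\mathbb{C}$-algebras; as both sides have dimension $d$, it is an isomorphism. (This is just the Chinese Remainder Theorem for the pairwise coprime ideals $\langle z-z_i\rangle$.)

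Next I would evaluate the Lagrange polynomials at the roots. For $\phi_j=\prod_{i\neq j}\frac{z-z_i}{z_j-z_i}$ one computes $\phi_j(z_i)=\prod_{k\neq j}\frac{z_i-z_k}{z_j-z_k}$: when $i=j$ every factor equals $1$, while when $i\neq j$ the factor indexed by $k=i$ vanishes. Hence $\phi_j(z_i)=\delta_{ij}$, so under $\mathrm{ev}$ the class of $\phi_j$ is sent to the standard basis vector $e_j\in\mathbb{C}^{d}$. Because $e_0,\dots,e_{d-1}$ form a $\mathbb{C}$-basis of $\mathbb{C}^{d}$ satisfying $e_i e_j=\delta_{ij}e_i$, and $\mathrm{ev}$ is an algebra isomorphism, the classes of $\phi_0,\dots,\phi_{d-1}$ form a basis of $\mathbb{C}[z]/\langle f\rangle$ with $\phi_i\phi_j=\delta_{ij}\phi_i$, which is exactly the canonical (orthogonal idempotent) basis claimed.

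There is essentially no hard analytic content here; the one point that requires care is that the polynomial $\phi_i\phi_j$ has degree $2(d-1)$, which exceeds $d-1$, so the identity $\phi_i\phi_j=\delta_{ij}\phi_i$ cannot be read off from the polynomial representatives directly but must be verified in the quotient. This is precisely where the squarefreeness of $f$ (equivalently, the standing assumption that the $z_i$ are distinct) is indispensable: it guarantees that equality of two classes in $\mathbb{C}[z]/\langle f\rangle$ is detected by agreement of their values at all the $z_i$, so that comparing $\phi_i\phi_j$ with $\delta_{ij}\phi_i$ pointwise on the roots suffices. If some root were repeated the evaluation map would fail to be injective and the $\phi_j$ would cease to be well-defined orthogonal idempotents, so the distinctness hypothesis is exactly what the argument consumes.
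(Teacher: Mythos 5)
Your proposal is correct and takes essentially the same route as the paper: the paper's proof likewise observes that $\phi_i(z_j)=\delta_{ij}$ and invokes the Lagrange interpolation formula, which is exactly your evaluation-map isomorphism in compressed form. You simply make explicit the details (the Chinese Remainder Theorem identification and the verification of $\phi_i\phi_j=\delta_{ij}\phi_i$ in the quotient) that the paper leaves implicit.
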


\begin{proof}
Observing that the constructed $\phi_i$ is characterized by $\phi_i(z_j)=\delta_{ij}$, we obtain the lemma directly from the Lagrange interpolation formula.
\end{proof}

By direct calculations, we get $\Delta^\alpha=\frac{1}{(\phi_\alpha,\phi_\alpha)}=\frac{m\prod_{\beta\neq\alpha}(z_\alpha-z_\beta)}{z_\alpha^{n-1}}$.
Now consider several different bases for $QH_T^\ast(W\mathbb{P}(m,n),\mathbb{C})$:
\begin{itemize}
  \item The natural basis $T_i=X^i$ and its dual basis $T^i$ with which $(T^i,T_j)=\delta^i_j$.
  \item The canonical basis $\phi_i$ as defined above and its dual basis $\phi^i=\Delta^i(q)\phi_i$.
  \item The normalized canonical basis $\hat{\phi}_i=\sqrt{\Delta^i(q)}\cdot\phi_i$, and its dual basis $\hat{\phi}^i=\hat{\phi}_i$.
\end{itemize}
Regarding $\phi_i$ as a function of $q$, we often write it as $\phi_i(q)$, (same for $\hat{\phi}_i,\phi^i$ and $\hat{\phi}^i$).
For an arbitrary point $pt\in QH_T^\ast(W\mathbb{P}(m,n),\mathbb{C})$, let $t^i,u^i,\bar{u}^i$ be coordinates such that
\[
pt=\sum t^iT_i=\sum u^i\phi_i(q)=\sum\bar{u}^i \phi_i(0).
\]
Regarding $t^i,u^i,\bar{u}^i$ as functions of $q$, we often write them as $t^i(q),u^i(q),\bar{u}^i(q)$. We often call $t^i(q)$ and $\bar{u}^i(q)$ as flat coordinates, and $u^i(q)$ as canonical coordinates.

\section{Graph Sum Formula and the $R$-matrix}

We first demonstrate graph sum formulas for A-model and B-model. These subsections follow \cite{Fang3}.

\subsection{Graph sum formula}
\

Given a connected graph $\Gamma$, we introduce the following notations:
\begin{itemize}
\item Let $V(\Gamma)$ denote the set of vertices in $\Gamma$.
\item Let $E(\Gamma)$ denote the set of edges in $\Gamma$.
\item Let $H(\Gamma)$ denote the set of half edges in $\Gamma$.
\item Let $L^0(\Gamma)$ denote the set of ordinary leaves in $\Gamma$.
\item Let $L^1(\Gamma)$ denote the set of dilation leaves in $\Gamma$.
\end{itemize}
By a half edge we mean either a leaf or an edge, together with a choice of one of the two vertices that it is attached to.
Note that the order of two vertices attached to an edge does not affect the graph sum formula in this paper.
With the above notations, we introduce the following labels:
\begin{itemize}
  \item Genus $g: V(\Gamma)\to \mathbb{Z}_{\geq 0}$;
  \item Marking $\beta: V(\Gamma)\to \{1,\cdots, m+n \}$;
  \item Height $k: H(\Gamma)\to \mathbb{Z}_{\geq 0}$.
\end{itemize}
Note that the marking on $V(\Gamma)$ induces a marking on $L(\Gamma)=L^0(\Gamma)\cup L^1(\Gamma)$ by $\beta(\ell)=\beta(v)$ where $\ell$ is attached to $v$.
Let $H(v)$ be the set of all half edges attached to $v$. Define the valence of $v\in V(\Gamma)$ as $\mathrm{val}(v)=|H(v)|$.
We say a labelled graph $\vec{\Gamma}=(\Gamma,g,\beta,k)$ is stable if $2g(v)-2+\mathrm{val}(v)> 0,\forall v\in V(\Gamma)$.
For a labelled graph $\vec{\Gamma}$, we define the genus by
$g(\vec{\Gamma})=\sum_{v\in V(\Gamma)} g(v)+ (|E(\Gamma)| - |V(\Gamma)| +1)$.
Define $\mathbf{\Gamma}_{g,N}(W\mathbb{P}(m,n))=\{\vec{\Gamma} \; \mathrm{ stable}: g(\vec{\Gamma})=g, |L^0(\Gamma)|=N \}$, and $\mathbf{\Gamma}(W\mathbb{P}(m,n))=\displaystyle\bigcup$$_{_{g,N}}\mathbf{\Gamma}_{g,N}(W\mathbb{P}(m,n))$.
Define the set of graphs $\tilde{\mathbf{\Gamma}}_{g,N}(W\mathbb{P}(m,n))$ in the same manner as $\mathbf{\Gamma}_{g,N}(W\mathbb{P}(m,n))$, except that the $N$ ordinary leaves are ordered.
We define weights for all leaves, edges and vertices and define the weight of a labeled graph $\vec{\Gamma}\in \mathbf{\Gamma}(W\mathbb{P}(m,n))$ to be the product of weights on all leaves, edges and vertices.
A graph sum formula expresses a quantity as sums of weights over all graphs.

\subsection{Givental's formula and the A-model graph sum}
\

For a semisimple Frobenius algebra $V$, let $\{\phi_\alpha\}$ be its canonical basis. Under the identification of $V$ and $T_pV$, $\phi_\alpha$ corresponds to a tangent vector in $T_pV$.
Let $\{u_\alpha\}$ be Givental's canonical coordinates corresponding to $\phi_\alpha$, i.e., $\phi_\alpha=\frac{\partial}{\partial u^\alpha}$.
Let $U=\mathrm{diag} (u_1,\cdots,u_N)$.
Take $\Psi$ to be the base change of $\hat{\phi}_\alpha$ to $T_\alpha$, i.e., $\hat{\phi}_\alpha=\sum_\beta T_\beta\cdot\Psi_\alpha^\beta$.
By Givental's theorem\cite{Given1}, there exists a unitary $R(z)$ (i.e., $R(z)R^T(-z)=\mathrm{id}$) such that $S=\Psi R(z)e^{\frac{U}z}$ is a fundamental solution of the QDE, with $R(z)=\mathrm{id} +R_1 z+\cdots$ a formal power series in $z$.
Furthermore, $R(z)$ is unique up to a right multiplication of $\exp(a_1 z+a_3z^3+a_5z^5+\cdots)$, where $a_i$ are complex diagonal matrices.

The $\mathcal{S}$ operator is given by $(a,\mathcal{S}(b))=\left< \! \left< a, \frac{b}{z-\psi}\right> \! \right>_{0,2}^{W\mathbb{P}(m,n),T}$. The quantization of the $\mathcal{S}$ operator relates the ancestor potential and the descendent potential via Givental's formula\cite{Given}, i.e.,
\[
D^{W\mathbb{P}(m,n),T} (\ub) = \exp \big(F_1^{W\mathbb{P}(m,n),T} \big) \hat{\mathcal{S}}^{-1}A^{W\mathbb{P}(m,n),T}(\ub,\mathbf{t}).
\]
We now describe graph sum formulas for the ancestor potential $A^{W\mathbb{P}(m,n),T}(\ub,\mathbf{t})$ and the descendent potential with arbitrary primary insertions $F_{g,N}^{W\mathbb{P}(m,n),T}(\ub,\mathbf{t})$.

Let $\mathbf{u}=\mathbf{u^\alpha}T_\alpha$.
We assign weights to leaves, edges, and vertices of a labeled graph $\vec{\Gamma}\in \mathbf{\Gamma}(W\mathbb{P}(m,n))$ as
follows.
\begin{enumerate}
\item Ordinary leaves. To each $\ell\in L^0(\Gamma)$  we assign
\[(\mathcal{L}^{\ub})_k^\beta(\ell) = [z^k]( \sum_{\alpha=1}^{m+n}\frac{\ub^\alpha(z)}{\sqrt{\Delta^\alpha (q)}}
R_\alpha^\beta (-z)).\]
\item Dilaton leaves. To each $\ell\in L^1(\Gamma)$ we assign
\[(\mathcal{L}^1)_k^\beta(\ell) = [z^{k-1}]( - \sum_{\alpha=1}^{m+n}\frac{1}{\sqrt{\Delta^\alpha (q)}}
R_\alpha^\beta (-z)).\]
\item Edges. To an edge connecting vertices marked by $\alpha$ and $\beta$, with heights $k$ and $\ell$ at the corresponding half-edges,
we assign
\[\mathcal{E}_{k,\ell}^{\alpha,\beta}(e)=[z^kw^\ell](\frac1{z+w}(\delta_{\alpha,\beta}-\sum_{\gamma=1}^{m+n}
R_\gamma^\alpha(-z)R_\gamma^\beta(-w))).\]
\item Vertices. To a vertex with genus $g$, marking $\beta$ and half-edges of heights $k_1\cdots k_N$, we assign
    \[\mathcal{V}^\beta_g(v) = (\sqrt{\Delta^\beta(q)})^{2g-2+N} \left<\prod_{j=1}^N\tau_{k_j}\right>_g.\]
\end{enumerate}
Hence the weight of $\vec{\Gamma}\in \mathbf{\Gamma}(W\mathbb{P}(m,n))$ is:
\[
w(\vec{\Gamma}) = \dprod_{v\in V (\Gamma)} \mathcal{V}^{\beta(v)}_{g(v)}(v)
\dprod_{e\in E(\Gamma)} \mathcal{E}_{k(h_1(e)),k(h_2(e))}^{\beta(v_1(e)),\beta(v_2(e))} (e) \cdot \dprod_{\ell\in L^0(\Gamma)} (\mathcal{L}^\ub)_{k(\ell)}^{\beta(\ell)}(\ell) \dprod_{\ell\in L^1(\Gamma)} (\mathcal{L}^1)_{k(\ell)}^{\beta(\ell)}(\ell).
\]
Then it holds
\[\begin{split}
\log(A^{W\mathbb{P}(m,n),T}(\ub,\mathbf{t}))&= \dsum_{\vec{\Gamma}\in \mathbf{\Gamma}(W\mathbb{P}(m,n))} \dfrac{h^{g(\vec{\Gamma})-1}w(\vec{\Gamma})}{\mathrm{Aut}(\vec{\Gamma})}= \dsum_{g\geq 0}h^{g-1} \dsum_{N\geq 0}\dsum_{\vec{\Gamma}\in \mathbf{\Gamma}_{g,N}(W\mathbb{P}(m,n))} \dfrac{w(\vec{\Gamma})}{\mathrm{Aut}(\vec{\Gamma})}.
\end{split}
\]

We define a new weight if we have $N$ ordered variables $(\ub_1,\cdots,\ub_N)$ and $N$ ordered ordinary leaves $\{\ell_1,\cdots,\ell_N\}$.
Let
\[
\begin{split}
&S_{\underline{\hat{\alpha}}}^{\hat{\underline{\gamma}}}(z)=(\hat{\phi_\alpha}, \mathcal{S}(\hat{\phi_\alpha})),\ \ub_j=\sum_{a\geq 0}(u_j)_az^a=\sum_\alpha \bold{u_j^\alpha} T_\alpha,\\
&(\stackrel{\circ}{\mathcal{L}^{\ub_j}})_k^\beta(\ell_j) = [z^k]( \sum_{\alpha,\gamma=1}^{m+n}\frac{\ub_j^\alpha(z)}{\sqrt{\Delta^\alpha (q)}} S_{\underline{\hat{\alpha}}}^{\hat{\underline{\gamma}}}(z)
R(-z)_\gamma^\beta).
\end{split}
\]
Let $\stackrel{\circ}{w}({\vec{\Gamma}})$ be the corresponding weight, then it holds similarly
\[\begin{split}
\dsum_{g\geq 0}\hbar^{g-1} \dsum_{N\geq 0} F_{g,N}^{W\mathbb{P}(m,n),T}(\mathbf{u_1,\cdots,u_N,t})= \dsum_{g\geq 0}\hbar^{g-1} \dsum_{N\geq 0} \dsum_{\vec{\Gamma}\in \mathbf{\Gamma}_{g,N}(W\mathbb{P}(m,n))} \dfrac{\stackrel{\circ}{w}(\vec{\Gamma})}{|\mathrm{Aut}(\vec{\Gamma})|}.
\end{split}
\]

\subsection{Eynard-Orantin recursion and the B-model graph sum}

\

Let $\omega_{g,N}$ be defined recursively by the Eynard-Orantin topological recursion
\[
\omega_{0,1}=0,\quad \omega_{0,2}=B(Y_1,Y_2)=\dfrac{dY_1\otimes dY_2}{(Y_1-Y_2)^2}.
\]
When $2g-2+N>0$, we have
\[\begin{split}
\omega_{g,N}(Y_1,\cdots,Y_N) = & \dsum_{\alpha=1}^{m+n} \mathrm{Res}_{Y\to p^\alpha} \dfrac{-\int_{\xi=Y}^{\hat{Y}} B(Y_n,\xi)}{2(\log (Y)-\log (\hat{Y}))dW_T} \big( \omega_{g-1,N-1}(Y,\hat{Y},Y_1,\cdots,Y_{N-1}) \\
& + \dsum_{g_{{}_1}+g_{{}_2}=g} \dsum_{\stackrel{I\cup J=\{1,\cdots,N-1 \}}{I\cap J\neq \emptyset}} \omega_{g_{{}_1},|I|+1}(Y,Y_I) \cdot\omega_{g_{{}_2},|J|+1}(\hat{Y},Y_J) \big),
\end{split}
\]
where $Y\neq p^\alpha$ and $\hat{Y}\neq Y$ are in a small neighborhood of $p^\alpha$ such that $W_T(\hat{Y}) \neq W_T(Y)$.

By definition, it holds that $x=W_T(e^y)$.
Near any critical point $v^\alpha(=\log{p^{\alpha}})$, we define $\zeta_\alpha,h_k^\alpha$ to satisfy
$x=u^\alpha-\zeta_\alpha^2, \ y=v^\alpha-\sum_{k=1}^\infty h_k^\alpha \zeta_\alpha^k$.
Expand $B(p^\alpha,p^\beta)$ in terms of $\zeta_i$ as
\[B(p^\alpha,p^\beta)=\big( \frac{\delta_{\alpha,\beta}}{(\zeta_\alpha -\zeta_\beta)^2}+ \sum_{k,\ell\geq 0} B_{k,\ell}^{\alpha,\beta}\zeta_\alpha^k\zeta_\beta^\ell\big) d\zeta_\alpha\otimes d\zeta_\beta.\]

Let
\[
\begin{split}
&\check{B}_{k,\ell}^{\alpha,\beta}=\frac{(2k-1)!!(2\ell-1)!!}{2^{k+\ell+1}} B_{k,\ell}^{\alpha,\beta},\quad
\check{h}_k^\alpha =\frac{(2k-1)!!}{2^{k-1}}h_{2k-1}^\alpha,\\
&d\xi_k^\alpha=(2k-1)!!2^{-d}Res_{p'\mapsto p^\alpha}B(p,p')(\sqrt{-1}\zeta_\alpha)^{-2d-1}.
\end{split}
\]

The B-model invariants $\omega_{g,N}$ can be expressed as graph sums. Given a labelled graph $\vec{\Gamma}\in \mathbf{\tilde \Gamma}_{g,N}(W\mathbb{P}(m,n))$ with $L^0(\Gamma)=\{\ell_1,\cdots,\ell_N\}$, we define its weight to be
\[
\begin{split}
\tilde{w}(\vec{\Gamma}) = & (-1)^{g(\vec{\Gamma})-1+N} \dprod_{v\in V (\Gamma)} \big(\dfrac{h_1^\alpha}{\sqrt{2}} \big)^{2-2g-\mathrm{val}(v)} \big<\dprod_{h\in H(v)} \tau_{k(h)}\big>_{g(v)}
\dprod_{e\in E(\Gamma)} \check{B}_{k(e),\ell(e)}^{\alpha(v_1(e)),\alpha(v_2(e))} (e) \\ & \cdot \dprod_{j=1}^N \dfrac1{\sqrt{-2}}d\xi_{k(\ell_j)}^{\alpha(\ell_j)}(Y_j) \dprod_{\ell\in L^1(\Gamma)} \big( -\dfrac1{-\sqrt{-2}} \big)\check{h}_{k(\ell)}^{\alpha(\ell)}.
\end{split}
\]

We cite here the Theorem 3.7 in \cite{Dunin}.
\begin{thms}
For $2g-2+N>0$, it holds
\[
\omega_{g,N}=\dsum_{\vec{\Gamma}\in \mathbf{\Gamma}_{g,N}(W\mathbb{P}(m,n))} \dfrac{\tilde{w}(\vec{\Gamma})}{|\mathrm{Aut}(\vec{\Gamma})|}.
\]
\end{thms}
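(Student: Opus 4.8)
The plan is to prove the identity by induction on the Euler characteristic $-\chi = 2g-2+N$, unwinding the Eynard--Orantin recursion one residue at a time and matching each recursive operation to a local building block of the graph sum. The recursion replaces $\omega_{g,N}$ by a residue, over the critical points $p^\alpha$, of the kernel $K(Y_N,Y)=\frac{-\int_{\hat Y}^{Y}B(Y_N,\cdot)}{2(\log Y-\log\hat Y)\,dW_T}$ applied to the lower-complexity invariants $\omega_{g-1,N-1}$ and $\dsum \omega_{g_1,|I|+1}\cdot\omega_{g_2,|J|+1}$. Since each of these already carries a graph-sum expression by the induction hypothesis, the only task is to show that taking the residue at $p^\alpha$ (i) creates one new vertex marked $\alpha$ whose weight is the intersection number $\big\langle\prod_{h\in H(v)}\tau_{k(h)}\big\rangle_{g(v)}$, (ii) turns the two glued half-edges at $Y,\hat Y$ into an edge carrying $\check B^{\alpha,\beta}_{k,\ell}$, and (iii) produces the prefactors $(h_1^\alpha/\sqrt2)^{2-2g-\mathrm{val}}$ and the overall sign $(-1)^{g(\vec\Gamma)-1+N}$.

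The base of the induction consists of $(g,N)=(0,3)$ and $(1,1)$, the only cases with $2g-2+N=1$. For these I would compute the residue directly and check that it equals the single-vertex graph sum: the $(0,3)$ term produces one trivalent vertex of weight $\langle\tau_0\tau_0\tau_0\rangle_0=1$ decorated by three leaf differentials $\frac{1}{\sqrt{-2}}d\xi^{\alpha}_{0}$, while $(1,1)$ produces the genus-one vertex with its dilaton decoration. The local geometry needed throughout is the expansion at each critical point: writing $x=u^\alpha-\zeta_\alpha^2$ and $y=v^\alpha-\dsum_k h^\alpha_k\zeta_\alpha^k$, the branch involution $Y\mapsto\hat Y$ becomes $\zeta_\alpha\mapsto-\zeta_\alpha$, the Bergman kernel expands with coefficients $B^{\alpha,\beta}_{k,\ell}$, and the $d\xi^\alpha_k$ are the normalized odd parts of $B$ at $p^\alpha$. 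The heart of the matching is that the local model $x\sim-\zeta^2$ of a simple ramification point is exactly the Airy/Witten--Kontsevich spectral curve, so the residue at a single ramification point reproduces $\psi$-class intersection numbers on $\overline{\mathcal M}_{g,n}$; this is what converts an analytic residue into the combinatorial vertex weight $\langle\prod\tau\rangle_g$, and the double factorials in $\check B$, $\check h$ and $d\xi$ are precisely the Laplace-transform constants relating $\zeta_\alpha^{2k}$ to $\tau_k$.

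The inductive step then substitutes the graph sums for $\omega_{g-1,N-1}$ and for the $\omega_{g_1}\cdot\omega_{g_2}$ pieces into the recursion. The residue at $p^\alpha$ glues the half-edges sitting at $Y$ and $\hat Y$: when they belong to the \emph{same} connected graph this is the $\omega_{g-1,N-1}$ term and raises the first Betti number, and when they belong to two pieces it is the splitting term; in both cases the new vertex collects the local expansion coefficients into one more factor of $\langle\prod\tau\rangle_{g(v)}$, and every pair of half-edges meeting through the kernel becomes an edge weight $\check B^{\alpha,\beta}_{k,\ell}$. One must check that summing over all distributions of heights and over all critical points $\alpha$ exactly reproduces $\dsum_{\vec\Gamma}\tilde w(\vec\Gamma)/|\mathrm{Aut}(\vec\Gamma)|$ over $\mathbf{\tilde\Gamma}_{g,N}$, with the automorphism factors arising from symmetrizing the a priori ordered leaves.

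The step I expect to be the main obstacle is the exact bookkeeping of the normalization constants and signs: the leaf factor $\frac{1}{\sqrt{-2}}$, the dilaton factor $-\frac{1}{-\sqrt{-2}}$, the vertex factor $(h_1^\alpha/\sqrt2)^{2-2g-\mathrm{val}}$, and the global sign $(-1)^{g(\vec\Gamma)-1+N}$ must all emerge consistently from repeated applications of the residue and the Laplace transform. A cleaner, though less self-contained, route that sidesteps this grind is to invoke the Eynard--Orantin/Givental dictionary of \cite{Dunin}: one identifies the local data $(h^\alpha_k,B^{\alpha,\beta}_{k,\ell})$ of the spectral curve with a Givental $R$-matrix and a $\Psi$-change of basis for the semisimple Frobenius structure of Section 2, after which the formula above is literally Givental's ancestor graph sum read off the $R$-matrix. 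In that approach the obstacle shifts to proving that $B^{\alpha,\beta}_{k,\ell}$ encodes the entries of $R(-z)R^{T}(-w)$ along an edge while $h^\alpha_k$ encodes the leaf and vertex data, which is again a constant-tracking computation but one already carried out in \cite{Dunin}.
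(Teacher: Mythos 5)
The paper offers no proof of this statement at all: it is imported verbatim as Theorem~3.7 of \cite{Dunin} (``We cite here the Theorem 3.7 in \cite{Dunin}''), so what you have written is not an alternative to the paper's argument but an outline of a from-scratch reproof of the cited result. That is a legitimate thing to attempt, and your second, ``less self-contained'' route --- identify $(h_k^\alpha, B_{k,\ell}^{\alpha,\beta})$ with the $R$-matrix and $\Psi$ of a semisimple Frobenius structure and read the formula off Givental's ancestor graph sum --- is essentially what \cite{Dunin} actually does, so at the level of strategy you have correctly located where the content lives.

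The self-contained induction you propose as the primary route has a genuine gap at its central step. A single application of the Eynard--Orantin kernel at $p^\alpha$ is a \emph{trivalent} operation: it splits one puncture into two and produces (after localization) the local Airy-curve data at that one ramification point, entangled with all other ramification points through the regular part $B_{k,\ell}^{\alpha,\beta}$ of the Bergman kernel. The graph-sum vertices, by contrast, carry weights $\big\langle \prod_{h\in H(v)}\tau_{k(h)}\big\rangle_{g(v)}$ of arbitrary genus and valence. Passing from iterated trivalent residues to these closed-form $\psi$-intersection numbers is not the ``bookkeeping of normalization constants'' you flag as the main obstacle; it is the theorem. It requires either Eynard's formula expressing $\omega_{g,N}$ of a local spectral curve in terms of intersection numbers on $\overline{\mathcal{M}}_{g,n}$ (whose proof is a nontrivial resummation of the recursion's diagrammatics), or equivalently the DVV/Virasoro recursion for the correlators $\langle\tau_{k_1}\cdots\tau_{k_N}\rangle_g$ of the Witten--Kontsevich tau function, so that the residue operation on the analytic side matches the recursion satisfied by the vertex weights on the combinatorial side. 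Your sketch asserts that ``the new vertex collects the local expansion coefficients into one more factor of $\langle\prod\tau\rangle_{g(v)}$'' without supplying this mechanism, and as stated the inductive step does not close: the graphs contributing to $\omega_{g,N}$ are not obtained from those of $\omega_{g-1,N+1}$ and the splitting terms by appending a single vertex. Either import Eynard's intersection-number formula (or the DVV recursion) explicitly as the engine of the induction, or follow the paper and simply cite \cite{Dunin}.
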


\subsection{A-model large radius limit}

\

In 3.4 and 3.5, we assume $p$ and $z$ to be negative real numbers.

We denote  $Q_1$ as the chart $W\mathbb{P}(m,n)\setminus\{[0:1]\}$, and $Q_2$ as the chart $W\mathbb{P}(m,n)\setminus\{[1:0]\}$.
By Tseng \cite{Tseng10} (see also Zong \cite{Zong}), we have
\[
\left.\big( R_j^i\big)\right|_{t=0,q=0} = \mathrm{diag} \big( (P_\sigma)_j^i\big), \;\mathrm{on} \; Q_\sigma,  \;\mathrm{ for }\; \sigma=1,2;
\]
\[
(P_\sigma)_j^i =\dfrac1{|G_\sigma|}\dsum_{(h)}\chi_{\alpha_j}(h)\chi_{\alpha_i}(h^{-1})\cdot \exp\left[ \dsum_{t=1}^\infty \dfrac{(-1)^t}{t(t+1)}B_{t+1}(c_\sigma(h)) \big( \dfrac{z}{w_\sigma}\big)^t \right].
\]
Let $\sigma=1$. Then we have the following:
\begin{itemize}
  \item $G=G_1=\mathbb{Z}/m\mathbb{Z}$.
  \item $V_{\alpha_{1+j}}=\mathbb{C}$, with $\mathbb{Z}/m\mathbb{Z}$ action: $\bar{t} \circ z=e^{2\pi i \frac{tj}{m}}\cdot z$. Then $\chi_{\alpha_{1+j}}(\bar{t})=e^{2\pi i \frac{tj}{m}}$.
  \item $T=\{ (\lambda_1,\lambda_2)\}$ acts on $Q_1$ by $(\lambda_1,\lambda_2)\circ z = ( \lambda_2^{\frac{n}m}\lambda_1)z$, i.e., $w_1= \lambda_2^{\frac{n}m}\lambda_1$.
  \item $c_\sigma( e^{2\pi i\frac{t}m})=\frac{t}{m}$, where $0\leq t \leq m$.
\end{itemize}

By \cite{Kacz11} we have
$\log \Gamma(z+s)=(z+s-\frac12) \log z -z +\frac12 \log2\pi +\sum_{t=1}^\infty \frac{(-1)^t \cdot B_{t+1}(s)}{t(t+1)}\frac1{z^t}$.
Let $\lambda=\frac{\lambda_1\lambda_2^{n/m}}{z}$. Then we have
\[
(P_1)_\beta^\alpha = \dfrac{\sqrt{2\pi}e^\lambda}{m\sqrt{\lambda}}\dsum_{h=0}^{m-1} \omega^{-h}\Gamma\big(\lambda+\dfrac{h}m\big) \lambda^{1-\lambda-\frac{h}m}.
\]

\subsection{B-model large radius limit}

\

Next we calculate the B-model $R$-matrix $\tilde{R}_\alpha^\beta$ while $\tilde{q}\to 0$.
Let
\[\tilde{R}_\alpha^\beta(\tilde{q})=\sqrt{-2\pi z}\dint_{\gamma_{{}_\beta}}e^{\frac{W_T(Y)-W_T(p^\beta)}z}\theta_\alpha,
\]
where:
\begin{itemize}
  \item $p^\beta$ are critical points of $W_T$ for $\beta=0,\cdots,m+n-1$;
  \item for a fixed $\beta$, $\gamma_{{}_\beta}=W_T^{-1}(W_T(p^\beta)+[0,+\infty))$;
  \item $\theta_\alpha=\frac{dz_\alpha}{z_\alpha^2}$, with $z_\alpha=c_\alpha\cdot(Y-p^\alpha)$ and $c_\alpha\in \mathbb{C}$ such that $W_T(Y)-W_T(p^\alpha)=\frac12z_\alpha^2+o(z_\alpha^2)$.
\end{itemize}
Noticing that $\tilde{R}_\alpha^\beta(\tilde{q})$ only involves terms of differences, it remains unchanged if we add a constant to $W_T$. More specifically, we replace $W_T$ by $W_T-(\sum_{\ell=1}^{m-1} w_\ell\log \tilde{q}_\ell + \sum_{\ell=1}^n w_{-\ell}\log \tilde{q}_{-\ell})$.
Further let $(\tilde{R}_1)_\alpha^\beta$ be the submatrix of the first $m$ columns and $m$ rows of $\lim_{\tilde{q}\to 0}\tilde{R}_\alpha^\beta(\tilde{q})$. It may be computed from $\int_{\gamma_{{}_\beta}}\exp(\frac{\tilde{W_T}(Y)-\tilde{W_T}(p^\beta)}z) \theta_\alpha$ with $\tilde{W_T}=Y^m+p\log Y$.
In this new set-up, we have the following results:
\begin{itemize}
  \item $p^\beta$s are roots of $0=\frac{\partial \tilde{W_T}}{\partial \log Y}=m( Y^m+\dfrac{p}m)$. This gives $p^\beta=\sqrt[m]{-\frac{p}m}\cdot e^{2\pi i \frac\beta{m}}$.
  \item We claim that $\gamma_{{}_\beta}=(0,+\infty)\cdot p^\beta$.
  In fact, direct calculation shows that
  $\tilde{W_T}(t\cdot p^\beta)-\tilde{W_T}(p^\beta)
  =( \frac{-p}m)\cdot ( t^m-1-\log t^m)\geq 0$.

  \item
  $\frac12c_\alpha^2  = \left. \frac{\tilde{W_T}(Y)-\tilde{W_T}(p^\alpha)}{(Y-p^\alpha)^2}\right|_{Y=p^\alpha}= \frac{-mp}{2(p^\alpha)^2}$.
  Taking $c_\alpha=-\frac{\sqrt{-mp}}{2(p^\alpha)^2}$, we get $\theta_\alpha=\frac{-p^\alpha}{\sqrt{-mp}}\cdot \frac{dY}{(Y-p^\alpha)^2}$.
\end{itemize}

Let $\mu=\frac{p}{mz},\omega=e^{2\pi i\frac{\alpha-\beta}{m}},s=-\mu t^m$.
Integrating by parts,
\[\int_{s=0}^{+\infty} e^{-s} \big(\frac{s}\mu \big)^{\mu} d\frac{1}{\omega- \big(\frac{s}\mu \big)^{\frac1m}}
= -\frac1\omega\sum_{h=0}^{m-1} \Gamma\big(\mu+\frac{h}m \big) \cdot \mu^{1-\mu-\frac{h}m}\cdot \omega^{-h}.\]
This equation differs from the main integral by $-\omega e^\mu \frac{\sqrt{2\pi}}{m\sqrt{\mu}} $.
Hence for $1\leq \alpha,\beta \leq m$,
\[
(\tilde{R}_1)_\alpha^\beta = e^\mu \dfrac{\sqrt{2\pi}}{m\sqrt{\mu}}  \dsum_{h=0}^{m-1} \Gamma\big(\mu+\dfrac{h}m \big) \cdot \mu^{1-\mu-\frac{h}m}\cdot \omega^{-h}.
\]

\begin{prop}
It holds that
$(\tilde{R}_1)_\alpha^\beta = (P_1)_\beta^\alpha, \; \mathrm{for }\; 0\leq \alpha,\beta \leq m-1$,
if we identify $\lambda$ and $\mu$.
\end{prop}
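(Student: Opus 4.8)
The plan is to prove the proposition by directly comparing the two closed forms already obtained in Subsections 3.4 and 3.5. Both the A-model matrix $(P_1)_\beta^\alpha$ and the B-model matrix $(\tilde R_1)_\alpha^\beta$ have been reduced to explicit finite sums of products of Gamma values and fractional powers, so the statement should follow by term-by-term inspection once $\lambda$ is identified with $\mu$. In other words, the real content of the proposition lies in the two preceding computations, and what remains is a matching of the resulting expressions.

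Concretely, I would begin by recalling the two formulas side by side,
\[
(P_1)_\beta^\alpha = \frac{\sqrt{2\pi}\,e^\lambda}{m\sqrt\lambda}\sum_{h=0}^{m-1}\omega^{-h}\,\Gamma\Big(\lambda+\tfrac hm\Big)\lambda^{1-\lambda-\frac hm}, \qquad (\tilde R_1)_\alpha^\beta = \frac{\sqrt{2\pi}\,e^\mu}{m\sqrt\mu}\sum_{h=0}^{m-1}\omega^{-h}\,\Gamma\Big(\mu+\tfrac hm\Big)\mu^{1-\mu-\frac hm}.
\]
Under the identification $\lambda=\mu$ the scalar prefactors $\frac{\sqrt{2\pi}\,e^\lambda}{m\sqrt\lambda}$ coincide, and for each fixed $h$ the analytic factors $\Gamma(\lambda+\tfrac hm)\lambda^{1-\lambda-\frac hm}$ coincide as well. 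Hence the only point that is not immediate is that the root-of-unity weights $\omega^{-h}$ appearing in the two sums are genuinely the same weight for the stated index pairing $(\tilde R_1)_\alpha^\beta=(P_1)_\beta^\alpha$.

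I would then verify this $\omega$-convention, which is the one genuinely non-routine step. On the B-model side one has $\omega=e^{2\pi i(\alpha-\beta)/m}$ by definition. On the A-model side the factor $\omega^{-h}$ in $(P_1)_\beta^\alpha$ originates from the character product $\chi_{\alpha_\beta}(\bar h)\,\chi_{\alpha_\alpha}(\bar h^{-1})$ in Tseng's formula; using $\chi_{\alpha_{1+j}}(\bar h)=e^{2\pi i hj/m}$ and $\bar h^{-1}\leftrightarrow -h$, this product telescopes to $e^{2\pi i h(\beta-\alpha)/m}$, which is exactly $\omega^{-h}$ for the \emph{same} $\omega=e^{2\pi i(\alpha-\beta)/m}$. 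Note that any additive shift in the sector labeling (so whether indices run $0,\dots,m-1$ or $1,\dots,m$) cancels in the difference of the two character exponents, so the weight depends only on $\beta-\alpha$; this is precisely the mechanism by which the transposition of indices in the statement is absorbed.

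The main obstacle is therefore not any analytic estimate but the bookkeeping of conventions: I must confirm that the upper/lower index placement, the inversion $\bar h\mapsto\bar h^{-1}$, and the $\alpha_{1+j}$ labeling shift all conspire to produce the identical weight $\omega^{-h}$ on both sides, with the transposition $\alpha\leftrightarrow\beta$ correctly accounted for. Once this is checked, the two sums agree term by term in $h$ after setting $\lambda=\mu$, and the proposition follows immediately.
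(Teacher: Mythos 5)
Your proposal matches the paper exactly: the paper offers no separate argument for this proposition, treating it as an immediate consequence of the two closed-form expressions derived in Subsections 3.4 and 3.5, which agree term by term once $\lambda$ is identified with $\mu$. Your additional check that the root-of-unity weight $\omega^{-h}$ coming from the character product on the A-side matches the $\omega=e^{2\pi i(\alpha-\beta)/m}$ convention on the B-side is exactly the bookkeeping the paper leaves implicit, and it is carried out correctly.
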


\subsection{The general case}

\

It is obvious that $\tilde{R}_\alpha^\beta(q)|_{q=0}=0=\tilde{R}_\beta^\alpha (q)|_{q=0}$ for $1\leq \alpha\leq m < m+1\leq \beta\leq m+n$.
Since both $\Psi Re^{\frac{U}z}$ and  $\Psi \tilde{R}e^{\frac{U}z}$ are solutions to the QDE on the Frobenius algebra $QH^\ast (W\mathbb{P}(m,n))\cong \mathrm{Jac}(W\mathbb{P}(m,n))$, we have by Givental's theorem$\tilde{R}(\tilde{q})=R(q)\cdot A$, where $A=\exp(a_1 z+a_3z^3+a_5z^5+\cdots)$, with $a_i$'s diagonal matrices, and their diagonal entries are scalar.
Considering the submatrix consisting of the first $m$ columns and the first $m$ rows of $R$ and $\tilde{R}$, by the previous proposition we have $\lim_{q\to 0}P_1(q)=\tilde{R}=\lim_{q\to 0}P_1(q)\big|_{m\times m}$.
Comparing the diagonal elements, we find the submatrix of first $m$ columns and $m$ rows of $A$ as $A|_{m\times m}=I_{m\times m}$.
Similarly, moving to the other chart we have $A|_{n\times n}=I_{n\times n}$. Hence $A=I$. This gives the following proposition.
\begin{prop}It holds that
\[R(q)=\tilde{R}(\tilde{q}).\]
\end{prop}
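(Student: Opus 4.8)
The plan is to exploit the rigidity built into Givental's theorem. Since $\Psi R(q) e^{U/z}$ and $\Psi \tilde{R}(\tilde{q}) e^{U/z}$ are both fundamental solutions of the \emph{same} QDE on $QH^\ast_T(W\mathbb{P}(m,n)) \cong \mathrm{Jac}(W_T)$, the two $R$-matrices can differ only by the residual gauge freedom, that is $\tilde{R}(\tilde{q}) = R(q)\,A$ with $A = \exp(a_1 z + a_3 z^3 + \cdots)$ a diagonal matrix whose coefficients $a_i$ are \emph{constant} (independent of $q$) and whose entries are therefore formal power series of the form $1 + O(z)$. The entire task reduces to showing that every diagonal entry of $A$ equals $1$.

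First I would pass to the large radius limit $q \to 0$ and separate the problem chart by chart. On $Q_1$ the relevant sectors are indexed by $1 \leq \alpha \leq m$ and on $Q_2$ by $m+1 \leq \beta \leq m+n$; since the mixed blocks of $\tilde{R}$ vanish at $q=0$, the limit $\tilde{R}|_{q=0}$ is block diagonal with an $m\times m$ block $\tilde{R}_1$ and an $n\times n$ block $\tilde{R}_2$, and likewise $R|_{q=0} = \mathrm{diag}((P_\sigma))$ by the Tseng computation. Because $A$ is diagonal and $q$-independent, the identity $\tilde{R} = R A$ restricts on the first $m$ indices to $\tilde{R}_1 = R_1 \cdot A|_{m\times m}$ at $q=0$.

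Next I would compare diagonal entries. The preceding proposition gives $(\tilde{R}_1)_\alpha^\beta = (P_1)_\beta^\alpha$, so $\tilde{R}_1$ agrees with the A-model block $R_1 = (P_1)$ after transposition; in particular their diagonals coincide exactly, $(\tilde{R}_1)_i^i = (R_1)_i^i$. Reading off the $(i,i)$ entry of $\tilde{R}_1 = R_1 \cdot A|_{m\times m}$ and using that $A$ is diagonal yields $(R_1)_i^i = (R_1)_i^i \cdot (A|_{m\times m})_i^i$; since $R = \mathrm{id} + R_1 z + \cdots$ makes each $(R_1)_i^i = 1 + O(z)$ an invertible power series, this forces $(A)_i^i = 1$ for $1 \leq i \leq m$, i.e. $A|_{m\times m} = I$. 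Repeating the identical argument on the chart $Q_2$, with the analogous $\mathbb{Z}/n\mathbb{Z}$ computation and its $P_2$, gives $A|_{n\times n} = I$ on the last $n$ indices.

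Finally, because the two charts exhaust all $m+n$ sectors without overlap, every diagonal entry of the diagonal matrix $A$ has been shown to equal $1$, so $A = I$ and hence $\tilde{R}(\tilde{q}) = R(q)$. The main obstacle I anticipate is not any single computation but the bookkeeping that lets single-chart large radius limits determine the whole of $A$: one must genuinely use that the gauge matrix carries no $q$-dependence, so that its value at $q=0$ is its value everywhere, and one must notice that the transpose discrepancy between $\tilde{R}_1$ and $R_1$ still leaves the diagonals equal, since it is only the diagonal comparison that pins down the diagonal $A$. Establishing the complementarity of the $m$- and $n$-blocks, so that the two chart computations jointly cover the full index set, is what closes the argument.
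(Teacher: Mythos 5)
Your proposal is correct and follows essentially the same route as the paper: invoke Givental's uniqueness to write $\tilde{R}(\tilde{q})=R(q)\cdot A$ with $A=\exp(a_1z+a_3z^3+\cdots)$ diagonal and $q$-independent, pass to the large radius limit where the off-diagonal blocks vanish, use the proposition identifying $(\tilde{R}_1)_\alpha^\beta$ with $(P_1)_\beta^\alpha$ to compare diagonal entries and force $A|_{m\times m}=I$, and repeat on the other chart to conclude $A=I$. Your explicit remarks that the transpose discrepancy is harmless for the diagonal comparison and that the $q$-independence of $A$ is what lets the $q=0$ computation determine $A$ everywhere are welcome clarifications of points the paper leaves implicit.
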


\section{All genus equivariant mirror symmetry}

\subsection{Calculations of the graph sum formula}
\

First observe that
$\sqrt{\Delta^\alpha} h_1^\alpha=\sqrt{2}$.

Let
\[
\begin{split}
&\xi_{\alpha,0}=\frac1{\sqrt{-1}}\sqrt{\frac2{\Delta^\alpha}} \frac{p^\alpha}{Y-p^\alpha},
\theta=\frac{d}{dW_T},
W_k^\alpha =d((-1)^k\theta^k(\xi_{\alpha,0})),\\
&(\tilde{u}_j)_k^\alpha=[z^k]\sum_\beta S^{\hat{\underline{\alpha}}}_{\hat{\underline{\beta}}}(z)\frac{u_j^\beta(z)}{\sqrt{\Delta^\beta(q)}}.
\end{split}
\]
Note that $d\xi_{\alpha,0}=d\xi_0^\alpha$.

\begin{thm}
By identifying $W_k^\alpha(Y_j)$ and $\sqrt{-2}(\tilde{u}_j)_k^\alpha$, we have
\[
\omega_{g,N}=(-1)^{g-1+N}F_{g,N}^{W\mathbb{P}(m,n),T} (\mathbf{u_1,\cdots,u_N,t}).
\]
\end{thm}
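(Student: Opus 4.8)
The plan is to prove the identity by matching the two graph-sum expansions term by term. On the A-side, Givental's formula expands $F_{g,N}^{W\mathbb{P}(m,n),T}(\mathbf{u_1,\cdots,u_N,t})$ as the $\stackrel{\circ}{w}$-weighted sum over the ordered stable graphs $\tilde{\mathbf{\Gamma}}_{g,N}(W\mathbb{P}(m,n))$ of Section 3.2, with the $\mathcal{S}$-twisted ordinary leaves $(\stackrel{\circ}{\mathcal{L}^{\ub_j}})_k^\beta$; on the B-side, Theorem 3.1 expands $\omega_{g,N}$ as the $\tilde w$-weighted sum over the same graphs. Since both are sums over the identical index set, it suffices to show that for each graph the weights agree up to the global factor $(-1)^{g-1+N}$. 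I would do this block by block, the central input being the equality of $R$-matrices $R(q)=\tilde R(\tilde q)$ from Proposition 3.2.

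First the vertices: by the normalization $\sqrt{\Delta^\alpha}h_1^\alpha=\sqrt{2}$ recorded at the start of Section 4.1, one has $h_1^\alpha/\sqrt{2}=1/\sqrt{\Delta^\alpha}$, so the B-model vertex factor $(h_1^\alpha/\sqrt{2})^{2-2g-\mathrm{val}(v)}=(\sqrt{\Delta^\alpha})^{2g-2+\mathrm{val}(v)}$ is exactly $\mathcal{V}^\beta_g(v)$ once markings are identified, and the $\psi$-class correlators $\langle\prod\tau\rangle_g$ are literally identical. Next the edges: the identity $\frac{1}{z+w}\big(\delta_{\alpha\beta}-\sum_\gamma R_\gamma^\alpha(-z)R_\gamma^\beta(-w)\big)=\sum_{k,\ell}\check B_{k,\ell}^{\alpha,\beta}z^k w^\ell$ is the standard Eynard-Orantin-to-Givental dictionary of \cite{Dunin} expressing the Bergman-kernel coefficients in terms of the $R$-matrix; combined with $R=\tilde R$ it gives $\check B_{k,\ell}^{\alpha,\beta}=\mathcal{E}_{k,\ell}^{\alpha,\beta}$ verbatim, including the double-factorial normalization built into $\check B$.

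The two leaf types are the substantive part. For the dilaton leaves I would use the expansion $y=v^\alpha-\sum_k h_k^\alpha\zeta_\alpha^k$ of the superpotential near each critical point: in the Eynard-Orantin formalism \cite{Dunin} the odd coefficients packaged in $\check h_k^\alpha$ are precisely the data from which the $R$-matrix is assembled, so that $\frac{1}{\sqrt{-2}}\check h_k^\alpha$ reproduces $(\mathcal{L}^1)_k^\beta=[z^{k-1}]\big(-\sum_\alpha \tfrac{1}{\sqrt{\Delta^\alpha}}R_\alpha^\beta(-z)\big)$ after invoking $R=\tilde R$. For the ordinary leaves the key input is the identification $W_k^\alpha(Y_j)=\sqrt{-2}(\tilde u_j)_k^\alpha$ of the statement. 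I would first recast the globally defined differentials $d\xi_k^\alpha$ through the height-raising reformulation $W_k^\alpha=d((-1)^k\theta^k\xi_{\alpha,0})$, verifying the base case $d\xi_0^\alpha=d\xi_{\alpha,0}$ (the defining property of $\xi_{\alpha,0}$) and that $\theta=d/dW_T$ produces the normalization $(2k-1)!!\,2^{-k}(\sqrt{-1})^{-2k-1}$ of $d\xi_k^\alpha$ --- note $(-1)^k(\sqrt{-1})^{-1}=(\sqrt{-1})^{-2k-1}$ since $(\sqrt{-1})^4=1$. The identification then encodes the Laplace transform along Lefschetz thimbles relating these B-model differentials to the $\mathcal{S}$-twisted variable $(\tilde u_j)_k^\alpha=[z^k]\sum_\beta S^{\hat{\underline{\alpha}}}_{\hat{\underline{\beta}}}(z)\,u_j^\beta(z)/\sqrt{\Delta^\beta}$; together with the $R$-propagation carried by the $\zeta_\beta$-expansions of $d\xi_k^\alpha$ at the remaining critical points this reproduces $(\stackrel{\circ}{\mathcal{L}^{\ub_j}})_k^\beta$, the $\frac{1}{\sqrt{-2}}$ of the B-model leaf cancelling the $\sqrt{-2}$ of the identification.

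Finally I would collect the global constants. After the four matchings the only surviving factor is the explicit $(-1)^{g(\vec\Gamma)-1+N}$ in $\tilde w(\vec\Gamma)$, which equals $(-1)^{g-1+N}$ because $g(\vec\Gamma)=g$ on $\mathbf{\Gamma}_{g,N}(W\mathbb{P}(m,n))$; the $\sqrt{2}$, $\sqrt{-2}$ and $\sqrt{-1}$ factors entering the vertices, leaves and the coordinate $\xi_{\alpha,0}$ must be checked to cancel block by block, a bookkeeping that simultaneously records the relation $\zeta_\alpha^2=-\tfrac{1}{2}z_\alpha^2$ between the two local coordinates. I expect the main obstacle to be exactly the ordinary-leaf step: proving that the global B-model differentials $d\xi_k^\alpha$, once rewritten as $W_k^\alpha$ and Laplace-transformed along the thimbles, carry precisely the $\mathcal{S}$- and $R$-data of the A-model descendent leaf. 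This is where the analytic content of Section 3.5 and of \cite{Tseng10} is genuinely used, in contrast to the edge and vertex matchings, which are formal once $R=\tilde R$ is in hand.
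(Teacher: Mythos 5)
Your proposal follows essentially the same route as the paper: a block-by-block matching of the A-model and B-model graph weights (vertices via $\sqrt{\Delta^\alpha}h_1^\alpha=\sqrt{2}$, edges and dilaton leaves via the dictionary $R_\beta^\alpha(z)=f_\beta^\alpha(-1/z)$ from \cite{Dunin} together with $R=\tilde R$, and ordinary leaves via the relation $d\xi_k^\alpha=W_k^\alpha-\sum_{i,\beta}\check B_{k-1-i,0}^{\alpha,\beta}W_i^\beta$ combined with the stated identification), with the global sign $(-1)^{g-1+N}$ collected at the end. This is precisely the structure of the paper's proof, so the proposal is correct and not a genuinely different argument.
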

\begin{proof} We prove by direct calculation as follows,

\begin{enumerate}
\item{Vertices}:
This follows from $\sqrt{\frac{\Delta^{\alpha(v)}}{2}}h_1^{\alpha(v)}=1$.

\item {Edges}:
By \cite{Dunin},$R_\beta^\alpha(z)=f_\beta^\alpha\big( -\frac1z\big)$ and the contribution of edges to weight in B-model is

\[
\check{B}_{k,\ell}^{\alpha,\beta}(e)
=[u^{-k}v^{-\ell}]\frac{uv}{u+v}\big(\delta_{\alpha\beta}-\sum_{\gamma=1}^{m+n}f_\gamma^\alpha(u)f_\gamma^\beta(v)\big)
=\mathcal{E}_{k,\ell}^{\alpha,\beta}(e).
\]

\item {Ordinary leaves}:
By $\frac1{z+w} = \frac1{z} \sum_{s\geq 0}\big( -\frac{w}z\big)^s$ we know $-\check{B}_{k-1-i,0}^{\alpha,\beta} = [z^{k-i}] R_\beta^\alpha(-z)$.
From \cite{Fang1} we know $d\xi_k^\alpha = W_k^\alpha -\sum_{i=0}^{k-1}\sum_\beta \check{B}_{k-1-i,0}^{\alpha,\beta}W_i^\beta$.

It holds after identifying $\frac1{\sqrt{-2}}W_k^\alpha(Y_j)$ and $(\tilde{u}_j)_k^\alpha$ that
\[
\begin{split}
\big(\mathcal{L}_d^{\bold{u_j}}\big)_{k(\ell_j)}^{\alpha(\ell_j)}(\ell_j)
& = [z^{k(\ell_j)}] \dsum_{\beta=1}^{m+n}\dsum_{i=0}^{k(\ell_j)} (\tilde{u}_j)_i^\beta \cdot z^i \cdot  R_\beta^{\alpha(\ell_j)} (-z)\\
& = \dsum_{i=0}^{k(\ell_j)}\dsum_{\beta=1}^{m+n}(\tilde{u}_j)_i^\beta \big( [z^{k(\ell_j)-i}]R_\beta^{\alpha(\ell_j)} (-z)\big)\\
& = \dfrac1{\sqrt{-2}}d\xi_{k(\ell_j)}^{\alpha(\ell_j)}(Y_j).
\end{split}
\]

\item {Dilaton leaves}:
By \cite{Fang1} and the relation $R_\beta^\alpha(z)=f_\beta^\alpha \big( \frac{-1}z \big)$, we have
\[\check{h}_{k(\ell)}^\alpha = [u^{1-k(\ell)}] \sum_{\beta=1}^{m+n} \sqrt{-1}h_1^\beta R_\beta^{\alpha(\ell)}\big(\frac{-1}{u}\big)
= [z^{k(\ell)-1}] \sum_{\beta=1}^{m+n} \sqrt{-1}h_1^\beta R_\beta^{\alpha(\ell)}\big(-z\big).\]

By $h_1^\beta=\sqrt{\frac2{\Delta^\alpha}}$, we know that
$\big( \mathcal{L}^1\big)_{k(\ell)}^{\alpha(\ell)}(\ell)=\big( -\frac1{\sqrt{-2}}\big)\check{h}_{k(\ell)}^{\alpha(\ell)}$.
\end{enumerate}
\end{proof}

\subsection{The Laplace Transform}
\

Following Iritani \cite{Iritani} with slight modification, we define as follows \cite{Fang3}.
\begin{Def}[equivariant Chern character] We define equivariant Chern character
\[\widetilde{ch}_z: K_T(W\mathbb{P}(m,n))\rightarrow H_T^\ast(W\mathbb{P}(m,n),\mathbb{Q})\left[\left[ \dfrac{p}{z} \right]\right]\] by the following two properties which uniquely characterize it:
\begin{enumerate}
  \item $\widetilde{ch}_z(\varepsilon_1\oplus\varepsilon_2)=\widetilde{ch}_z(\varepsilon_1)+\widetilde{ch}_z(\varepsilon_2)$.
  \item If $\mathcal{L}$ is a $T$-equivariant line bundle on $W\mathbb{P}(m,n)$, then
  $\widetilde{ch}_z(\mathcal{L})=\exp\big( -\frac{2\pi i(c_1)_T(\mathcal{L})}{z}\big)$.
\end{enumerate}
\end{Def}

\begin{Def}[equivariant $K$-theoretic framing]
For $\forall \varepsilon\in K_T(W\mathbb{P}^1(m,n))$, we define the $K$-theoretic framing of $\varepsilon$ by
$\kappa(\varepsilon)= (-z)^{1-\frac{(c_1)_T(W\mathbb{P}(m,n))}{z}}\Gamma\big( 1-\frac{(c_1)_T(W\mathbb{P}(m,n))}{z}\big) \widetilde{ch}_z(\varepsilon)$,
where $(c_1)_T(W\mathbb{P}(m,n)) =mX^m-nX^{-n}+p$.
\end{Def}

\begin{Def}[equivariant SYZ $T$-dual] Let $\mathcal{L}=\mathcal{O}_{W\mathbb{P}(m,n)}(\ell_1 p_1+\ell_2 p_2)$ be an equivariant ample line bundle on $W\mathbb{P}(m,n)$, where $\ell_1,\ell_2 \in \mathbb{Z}$, such that $\ell_1+\ell_2 >0$. We define the equivariant SYZ $T$-dual $\mathrm{SYZ}(\mathcal{L})$ of $\mathcal{L}$ be the figure below:
\end{Def}
\begin{figure}[h]
\begin{center}
\setlength{\unitlength}{2mm}
\begin{picture}(60,20)

\put(10,6){\vector(1,0){10}}
\put(20,6){\line(1,0){10}}
\put(30,6){\vector(0,1){5}}
\put(30,11){\line(0,1){5}}
\put(30,16){\vector(1,0){10}}
\put(40,16){\line(1,0){10}}

\put(5,2){$-\infty+2\pi i \cdot \frac{-\ell_1}{n}$}
\put(27,2){$2\pi i\cdot\frac{-\ell_1}{n}$}
\put(27,18){$2\pi i\cdot\frac{\ell_2}{m}$}
\put(40,18){$2\pi i \cdot\frac{\ell_2}{m}+(+\infty)$}

\end{picture}
  \label{fig1}
\end{center}
\end{figure}

Extend the definition additively to the equivariant $K$-theory group $K_T(W\mathbb{P}^1(m,n))$.
By \cite{Fang2}:
\begin{thms}
\[
\left<\!\left< \dfrac{\kappa(\mathcal{L})}{z(z-\psi)}\right>\!\right>^{W\mathbb{P}^1(m,n)}_{0,1}=\dint_{\mathrm{SYZ}(\mathcal{L})} e^{\frac{W_T}z} dy.
\]
\end{thms}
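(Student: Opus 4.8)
The plan is to compute both sides in closed form and then identify them. For the left-hand side I would first recognize the bracket, via the string equation and the definition of the fundamental solution $S=\Psi R(z)e^{U/z}$, as $z^{-1}$ times the pairing of the $J$-function against the $K$-theoretic framing class, i.e. $\langle\!\langle \kappa(\mathcal{L})/(z(z-\psi))\rangle\!\rangle_{0,1}=z^{-1}(S(\kappa(\mathcal{L})),\mathbf{1})$. Invoking the Mirror Theorem $I=J$ then lets me replace the $J$-function by the explicit $I$-function series of Definition 2.1. Because $\kappa(\mathcal{L})$ carries the Gamma factor $\Gamma(1-(c_1)_T(W\mathbb{P}(m,n))/z)$ together with $\widetilde{ch}_z(\mathcal{L})=\exp(-2\pi i (c_1)_T(\mathcal{L})/z)$, the pairing collapses to a hypergeometric series in $Q$ and $z^{-1}$ whose coefficients are ratios of Gamma functions evaluated at the equivariant weights, with the integers $\ell_1,\ell_2$ entering only through the exponential framing factor.

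For the right-hand side I would parametrize $\mathrm{SYZ}(\mathcal{L})$ as the two horizontal rays at heights $2\pi i(-\ell_1)/n$ and $2\pi i\ell_2/m$ joined by the vertical segment, substitute into $\int_{\mathrm{SYZ}(\mathcal{L})} e^{W_T/z}\,dy$, and expand the exponential of the Laurent part of $W_T$. After the substitution $Y=e^y$, each ray integral reduces—via a change of variables turning the leading monomial into $e^{-s}s^{a}$—to a Gamma integral $\int_0^\infty e^{-s}s^a\,ds=\Gamma(a+1)$; the $\tilde{w}_\ell \log$-terms of $W_T$ supply the non-integer exponents, while the ray heights supply the shifts by $\ell_1,\ell_2$. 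This is the same mechanism already used in Section 3.5 to evaluate the large-radius $R$-matrix, now carried out with a nontrivial framing.

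Rather than matching the two hypergeometric series coefficient by coefficient, I would prefer to argue structurally that both sides solve the same Picard--Fuchs (GKZ) system. The left-hand side is annihilated by the operators $\mathcal{P}_d$ because $J=I$ and $\mathcal{P}_d(I)=0$ by Lemma 2.1. The right-hand side is annihilated by the same operators: differentiating $e^{W_T/z}$ under the integral sign realizes each $\mathcal{D}_i=\sum_a m_{ia}z\partial_a$ as multiplication by the corresponding monomial of $\partial W_T/\partial y$, and integrating by parts along $\mathrm{SYZ}(\mathcal{L})$ turns $\mathcal{P}_d$ applied to the integral into a pure boundary term. Since both ends of the cycle run into the region where $\mathrm{Re}(W_T/z)\to-\infty$, that boundary term vanishes, giving $\mathcal{P}_d(\mathrm{RHS})=0$. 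I would then pin down the unique common solution by matching leading asymptotics at the large-radius limit $q\to 0$, where the oscillatory integral localizes near the relevant critical point of $W_T$ and reduces to the model potential $\tilde{W}_T=Y^m+p\log Y$; its value there is fixed by the computation $(\tilde{R}_1)_\alpha^\beta=(P_1)_\beta^\alpha$ together with the normalization built into $\kappa$.

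I expect the main obstacle to be the boundary analysis for the integration by parts along the non-compact cycle $\mathrm{SYZ}(\mathcal{L})$: one must verify that both infinite ends lie in the decay region of $e^{W_T/z}$ and that the vertical connecting segment contributes no anomalous term, while simultaneously checking that the height offsets $2\pi i(-\ell_1)/n$ and $2\pi i\ell_2/m$ reproduce exactly the degree shift encoded by $\widetilde{ch}_z(\mathcal{L})$ in $\kappa(\mathcal{L})$. This is precisely the point where the non-integer powers and the Gamma-function arguments must align on the nose, and keeping the branch choices and the factors of $2\pi i$ consistent between the two models is the crux of the argument.
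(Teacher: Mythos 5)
You should first be aware that the paper does not prove this statement at all: it is imported verbatim from \cite{Fang2} (``By \cite{Fang2}:''), so there is no internal argument to compare yours against. Your proposal is therefore a reconstruction of the external proof. Its skeleton --- string equation to rewrite the left side as the pairing $(J(\tau,z),\kappa(\mathcal{L}))$, the mirror theorem $I=J$, the observation that both the hypergeometric series and the oscillatory integral are annihilated by the operators $\mathcal{P}_d$ of Lemma 2.1 (with the boundary terms of the integration by parts killed by the decay of $e^{W_T/z}$ at the two ends of $\mathrm{SYZ}(\mathcal{L})$), and a final identification of the two solutions --- is indeed the standard route in Iritani's and Fang's work, and the Gamma-function bookkeeping you describe is exactly where the factor $\Gamma\big(1-\frac{(c_1)_T}{z}\big)$ in $\kappa$ comes from. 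Two smaller caveats: Theorem 2.1 as stated in the paper is non-equivariant, so you need the equivariant mirror theorem of \cite{Coates2} for the step $I=J$; and the first reduction is literally Corollary 4.1(1) of the paper, which is *derived from* this theorem, so you should present it as an independent application of the string equation rather than cite that corollary.

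The genuine gap is in your uniqueness step. The relevant GKZ system has an $(m+n)$-dimensional solution space, and ``matching leading asymptotics at the large radius limit, where the integral localizes near the relevant critical point'' does not single out a solution. What is required is the full stationary-phase expansion of $\int_{\mathrm{SYZ}(\mathcal{L})}e^{W_T/z}\,dy$ at \emph{all} $m+n$ critical points --- equivalently, the decomposition of the cycle $\mathrm{SYZ}(\mathcal{L})$ into Lefschetz thimbles $\gamma_\beta$ --- matched against the decomposition of $\kappa(\mathcal{L})$ in the normalized canonical basis $\hat\phi_\beta$, which is precisely where the quantum Riemann--Roch computation of $R$ and the normalization built into $\kappa$ must be used in tandem. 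In particular, the dependence on $(\ell_1,\ell_2)$ is carried entirely by the homology class of the integration cycle and by $\widetilde{ch}_z(\mathcal{L})$, not by the leading asymptotics at any single critical point, so the argument as written cannot verify that the specific contour of Definition 4.3 corresponds to the specific class $\kappa(\mathcal{O}(\ell_1p_1+\ell_2p_2))$ rather than to another solution with the same dominant behaviour. Making that correspondence precise (thimble coefficients on one side, Chern character coefficients on the other) is the actual content of the theorem and is the part your sketch still leaves open.
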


\begin{cor}
\

\begin{enumerate}
  \item By string equation: \[
  \dint_{\mathrm{SYZ}(\mathcal{L})} e^{\frac{W_T}z} dy =\left<\!\left< \dfrac{\kappa(\mathcal{L})}{z(z-\psi)}\right>\!\right>^{W\mathbb{P}(m,n),T}_{0,1}
  =\left<\!\left< 1, \dfrac{\kappa(\mathcal{L})}{z-\psi}\right>\!\right>^{W\mathbb{P}(m,n),T}_{0,2};
  \]
  \item Integrating by parts, \[ -\left<\!\left< \dfrac{\kappa(\mathcal{L})}{z-\psi}\right>\!\right>^{W\mathbb{P}(m,n),T}_{0,1} =-z \dint_{\mathrm{SYZ}(\mathcal{L})} e^{\frac{W_T}z} dy = \dint_{\mathrm{SYZ}(\mathcal{L})} e^{\frac{W_T}z} ydx.
  \]
\end{enumerate}
\end{cor}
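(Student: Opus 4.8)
The plan is to obtain both parts of the corollary as formal consequences of the preceding theorem, using only the string equation and an integration by parts; no further geometric input is needed.

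For part (1), the first equality is exactly the preceding theorem specialized to the insertion $\kappa(\mathcal{L})$, so there is nothing to prove there. The content is the second equality, which I would derive from the string equation. Writing $\frac{\kappa(\mathcal{L})}{z(z-\psi)}=\frac1z\cdot\frac{\kappa(\mathcal{L})}{z-\psi}$ and using linearity of the double bracket in $z^{-1}$, it suffices to show $\left<\!\left< 1,\frac{\kappa(\mathcal{L})}{z-\psi}\right>\!\right>_{0,2}=\frac1z\left<\!\left< \frac{\kappa(\mathcal{L})}{z-\psi}\right>\!\right>_{0,1}$. Expanding $\frac{\kappa(\mathcal{L})}{z-\psi}=\sum_{a\ge 0}\tau_a(\kappa(\mathcal{L}))\,z^{-a-1}$ and applying the string equation $\left<\!\left<\tau_0(1),\tau_a(b)\right>\!\right>_{0,2}=\left<\!\left<\tau_{a-1}(b)\right>\!\right>_{0,1}$ for $a\ge 1$ (the lowering operation annihilates the auxiliary $\mathbf{t}$-insertions, each of which carries $\psi^0$), the reindexing $a\mapsto a-1$ matches the two power series term by term. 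The one point to check carefully is the unstable range: the $a=0$ stable term vanishes since it would require $\tau_{-1}$, while the two unstable contributions $\left<\frac{\kappa(\mathcal{L})}{z-\psi},1\right>_{0,2,0}=\int\kappa(\mathcal{L})$ and $\frac1z\left<\frac{\kappa(\mathcal{L})}{z-\psi}\right>_{0,1,0}=\frac1z\cdot z\int\kappa(\mathcal{L})$ agree by the special definitions fixed in Section 2.

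For part (2), the first equality is immediate from part (1): extracting the scalar $\frac1z$ gives $\int_{\mathrm{SYZ}(\mathcal{L})}e^{W_T/z}\,dy=\frac1z\left<\!\left<\frac{\kappa(\mathcal{L})}{z-\psi}\right>\!\right>_{0,1}$, and multiplying by $-z$ yields the claim. The second equality is an integration by parts in $y$. Since $x=W_T(e^y)$, we have $dx=dW_T$ along the contour, and taking $u=y$, $dv=e^{x/z}dx$, $v=z\,e^{x/z}$ gives $\int_{\mathrm{SYZ}(\mathcal{L})}e^{x/z}y\,dx=\big[\,z\,y\,e^{x/z}\,\big]-z\int_{\mathrm{SYZ}(\mathcal{L})}e^{x/z}\,dy$. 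The step I expect to be the main obstacle is showing that the boundary term vanishes: along $\mathrm{SYZ}(\mathcal{L})$ the real part of $y$ runs to $\pm\infty$, so $Y=e^y$ tends to $\infty$ or $0$ and $W_T$ is dominated by $Y^m$, respectively by $\tilde q_{-n}Y^{-n}$; hence $\mathrm{Re}(W_T)\to+\infty$ at both ends, and as $z$ is a negative real number $e^{W_T/z}\to 0$ exponentially, dominating the linear factor $y$. This kills the boundary term and leaves exactly $\int_{\mathrm{SYZ}(\mathcal{L})}e^{x/z}y\,dx=-z\int_{\mathrm{SYZ}(\mathcal{L})}e^{W_T/z}\,dy$, which is the asserted identity.
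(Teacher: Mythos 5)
Your proposal is correct and follows the same route the paper intends: the corollary is stated with exactly the two hints (``by string equation'' and ``integrating by parts''), and you have filled in precisely those steps, including the two points that actually need checking --- the matching of the unstable $(g,d,\ell)=(0,0,0)$ contributions via the special definitions of Section~2, and the vanishing of the boundary term $\bigl[z\,y\,e^{W_T/z}\bigr]$ at both ends of $\mathrm{SYZ}(\mathcal{L})$ where $\mathrm{Re}(W_T)\to+\infty$. Nothing further is needed.
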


Define
\[
S_\beta^{\underline{\hat{\alpha}}}(z)=\left<\!\left< \phi_\beta(q),\frac{\hat{\phi}_\alpha(q)}{z-\psi}\right>\!\right>_{0,2}^{W\mathbb{P}(m,n),T},\\
S_{\widehat{\underline{\beta}}}^{\kappa(\mathcal{L)}} (z) =\left<\!\left< \hat{\phi}_\beta(q),\frac{\kappa(\mathcal{L)}}{z-\psi} \right>\!\right>_{0,2}^{W\mathbb{P}(m,n),T}.\]
More generally, we have
\begin{prop}
\[
S_\beta^{\widehat{\underline{\alpha}}}(z)=  -z \dint_{y\in \gamma_\beta(\mathcal{L})} e^{\frac{W_T}z} \dfrac{d\xi_{\alpha,0}}{\sqrt{-2}}.
\]
\[S_{\hat{\underline{\beta}}}^{\kappa(\mathcal{L})}(z)=-z\dint_{y\in{\mathrm{SYZ}(\mathcal{L})}} e^{\frac{W_T}z}\dfrac{d\xi_{\beta,0}}{\sqrt{-2}}.\]
\end{prop}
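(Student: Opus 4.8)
The plan is to read both identities as equalities between two fundamental solution matrices of the quantum differential equation. On the left sit the descendent $\mathcal{S}$-matrix entries, which by Givental's theorem assemble into the fundamental solution $S=\Psi R(z)e^{U/z}$; on the right sit oscillatory (Laplace) integrals of $e^{W_T/z}$ against the $B$-model one-forms $d\xi_{\alpha,0}/\sqrt{-2}$ over Lefschetz thimbles. My strategy is to prove the first (doubly localized) identity directly from the already-computed $B$-model $R$-matrix of Section 3.6, and then to obtain the second from Theorem 4.1 by changing the primary insertion.

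First I would treat the first formula. Writing $W_T=(W_T-W_T(p^\beta))+W_T(p^\beta)$ and recalling $W_T(p^\beta)=u^\beta$, we get $e^{W_T/z}=e^{u^\beta/z}e^{(W_T-W_T(p^\beta))/z}$, so the factor $e^{u^\beta/z}$ reproduces the $e^{U/z}$ in $S=\Psi R e^{U/z}$. A direct computation gives $d\xi_{\alpha,0}/\sqrt{-2}=-\,p^\alpha(\Delta^\alpha)^{-1/2}\,dY/(Y-p^\alpha)^2$, which is a constant multiple $-p^\alpha c_\alpha(\Delta^\alpha)^{-1/2}\theta_\alpha$ of the form $\theta_\alpha$ appearing in the definition of $\tilde R_\alpha^\beta(\tilde q)$ in Section 3.6. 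Hence the remaining integral $\int_{\gamma_\beta}e^{(W_T-W_T(p^\beta))/z}\,d\xi_{\alpha,0}/\sqrt{-2}$ is, up to the $\sqrt{-2\pi z}$ and $\sqrt{\Delta}$ normalizations, exactly the entry $\tilde R_\alpha^\beta(\tilde q)$; the prefactors $-z$ and $1/\sqrt{-2}$ are chosen precisely so that the leading stationary-phase coefficient becomes the base-change matrix $\Psi$ (this is where the identity $\sqrt{\Delta^\alpha}\,h_1^\alpha=\sqrt{2}$ of Section 4.1 enters). Invoking Proposition 3.2, $\tilde R(\tilde q)=R(q)$, the right-hand side assembles into $(\Psi R e^{U/z})$, i.e. into $S_\beta^{\widehat{\underline{\alpha}}}(z)$; since both sides solve the QDE and share this asymptotics, and the residual $\exp(a_1z+a_3z^3+\cdots)$ ambiguity is already pinned down in Propositions 3.2--3.3, the equality is exact and not merely asymptotic. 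I would also record that the right-hand side is independent of the ample bundle $\mathcal{L}$ in $\gamma_\beta(\mathcal{L})$, consistently with the $\mathcal{L}$-free left-hand side: the form $d\xi_{\alpha,0}$ has a double pole with vanishing residue at $p^\alpha$, so the thimble integral depends only on the relative homology class of $\gamma_\beta$, and the thimbles attached to $p^\beta$ for different ample $\mathcal{L}$ are homologous relative to this pole.

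For the second formula I would start from Corollary 4.1, which gives the unit-insertion transform $\langle\!\langle 1,\kappa(\mathcal{L})/(z-\psi)\rangle\!\rangle_{0,2}=\int_{\mathrm{SYZ}(\mathcal{L})}e^{W_T/z}\,dy$. Upgrading the primary insertion from $1$ to $\hat\phi_\beta$ amounts, under the mirror isomorphism $QH_T^\ast\cong\mathrm{Jac}(W_T)$ of Proposition 2.2, to replacing the measure $dy$ by $-z\,d\xi_{\beta,0}/\sqrt{-2}$: indeed $\hat\phi_\beta$ is the normalized idempotent supported at $p^\beta$, mirror to the one-form $d\xi_{\beta,0}$ whose double pole sits at $p^\beta$, and an integration by parts against $e^{W_T/z}$ produces the factor $-z$. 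Summing the first formula over the expansion of $\kappa(\mathcal{L})$ in the basis $\{\hat\phi_\alpha\}$ and over the decomposition of $\mathrm{SYZ}(\mathcal{L})$ into the thimbles $\gamma_\beta(\mathcal{L})$ then furnishes an independent cross-check of the second identity.

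The hard part will be the exact bookkeeping of the normalization constants ($\sqrt{-2}$, $\sqrt{-2\pi z}$, $\sqrt{\Delta^\alpha}$, $c_\alpha$ and $p^\alpha$) in the stationary-phase expansion, so that the leading coefficient of the $B$-model integral is \emph{exactly} the base-change matrix $\Psi$ and not merely proportional to it; this is precisely what turns the asymptotic match into the stated exact identity. A secondary technical point is to justify the integration-by-parts step that converts the primary insertion $\hat\phi_\beta$ into the localized form $d\xi_{\beta,0}/\sqrt{-2}$ with no boundary contribution, which relies on the decay of $e^{W_T/z}$ along the thimbles granted by the convention of Section 3.5 that $p$ and $z$ be negative real.
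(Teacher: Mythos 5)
Your route differs substantially from the paper's, and the part of the argument the paper actually carries out in detail is exactly the part you leave as an assertion. The paper does not go through stationary phase or the identification $R=\tilde R$ at all. It proves the second identity directly from Corollary 4.1 in three concrete steps: (a) apply $z\,\partial/\partial t_i$ repeatedly to $\langle\!\langle 1,\kappa(\mathcal{L})/(z-\psi)\rangle\!\rangle_{0,2}=\int_{\mathrm{SYZ}(\mathcal{L})}e^{W_T/z}\,dY/Y$ to get the same identity with any monomial $X^i$ inserted and $Y^i$ in the integrand; (b) represent the idempotent $\phi_\beta$ on the mirror side by the polynomial $\frac{f(Y)Y^n}{(Y-p^\beta)\,(Y^nf(Y))'|_{p^\beta}}$ with $f=\partial W_T/\partial y$; (c) integrate by parts using $e^{W_T/z}f(Y)\,dY/Y=z\,d(e^{W_T/z})$, and observe that the leftover term is $\int e^{W_T/z}g(Y)f(Y)\,dY/Y$ with $g$ a polynomial, which vanishes because $gf$ lies in the Jacobian ideal and hence equals $\langle\!\langle 0,\kappa(\mathcal{L})/(z-\psi)\rangle\!\rangle_{0,2}=0$ by step (a). Your sentence ``upgrading the primary insertion from $1$ to $\hat\phi_\beta$ amounts to replacing the measure $dy$ by $-z\,d\xi_{\beta,0}/\sqrt{-2}$'' is precisely the content of (a)--(c), and asserting it via ``$\hat\phi_\beta$ is mirror to the form with a double pole at $p^\beta$'' is circular: that correspondence is what the proposition establishes. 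Note also that your ``secondary technical point'' misidentifies the obstruction: the delicate term in the integration by parts is not a boundary contribution killed by decay, but a bulk correction term killed by ideal membership.

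Your treatment of the first identity via $S=\Psi Re^{U/z}$, the thimble representation of $\tilde R$, and Proposition 3.2 is a legitimate alternative in outline, and it would buy a conceptual explanation of why the proposition holds (the oscillatory integrals are the fundamental solution). But as written it is only an outline: the entire burden is the normalization bookkeeping you defer, plus the passage from an identity of asymptotic expansions of formal fundamental solutions to the stated identity of generating functions, and the paper's proof shows this can be avoided entirely. Your proposed cross-check of the second identity by summing the first over thimbles is likewise not independent, since the decomposition of $\mathrm{SYZ}(\mathcal{L})$ into thimbles with coefficients $(\phi_\beta(0),\kappa(\mathcal{L}))$ is essentially the content of the relations following the proposition. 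To repair the proposal, carry out steps (a)--(c) above; the first identity then follows by the same manipulation with $\gamma_\beta(\mathcal{L})$ in place of $\mathrm{SYZ}(\mathcal{L})$.
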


\begin{proof}

We prove the second equation as an example. The first one may be proved in a similar way.

Let $f(Y)=\frac{\partial W_T}{\partial y}$. Then $\Delta^\alpha=p^\alpha\cdot f'(p^\alpha)$.
By $\hat\phi_\alpha =\sqrt{\Delta^\alpha}\phi_\alpha$ and $\xi_{\alpha,0}=\frac1{\sqrt{-1}} \sqrt{\frac2{\Delta^\alpha}} \frac{p^\alpha}{Y-P^\alpha}$, the desired proposition is equivalent to
\[
\left<\!\left< \phi_\beta,\dfrac{\kappa(\mathcal{L)}}{z-\psi} \right>\!\right>_{0,2}^{W\mathbb{P}(m,n),T}  = z\dint_{y\in{\mathrm{SYZ}(\mathcal{L})}} e^{\frac{W_T}z} d\dfrac{p^\beta}{(Y-P^\beta)\Delta^\beta}.
\]

By Corollary 4.1, we have
\[\left<\!\left< 1,\frac{\kappa(\mathcal{L)}}{z-\psi} \right>\!\right>_{0,2}^{W\mathbb{P}(m,n),T}  = \int_{y\in{\mathrm{SYZ}(\mathcal{L})}} e^{\frac{W_T}z} \frac{dY}Y.\]
Applying $z\dfrac{\partial}{\partial t_i}$ to both sides,  we have
\[\left<\!\left< X^i,\frac{\kappa(\mathcal{L)}}{z-\psi}\right>\!\right>_{0,2}^{W\mathbb{P}(m,n),T} = \int_{y\in\mathrm{SYZ}(\mathcal{L})} Y^i e^{\frac{W_T}z} \frac{dY}Y.\]
Note that for the left hand side derivation, we have opened the double bracket and used the string equation.

This implies
\[\begin{split}
& \left<\!\left< \phi_\beta,\dfrac{\kappa(\mathcal{L)}}{z-\psi} \right>\!\right>_{0,2}^{W\mathbb{P}(m,n),T} \\  = & \dint_{y\in{\mathrm{SYZ}(\mathcal{L})}} e^{\frac{W_T}z} \dfrac{f(Y)Y^n}{Y-p^\beta} \dfrac{dY}Y \cdot \dfrac1{(Y^nf(Y))'|_{p^\beta}}\\
= & -z \dint_{y\in{\mathrm{SYZ}(\mathcal{L})}}\dfrac{1}{(Y-p^\beta)f'(p^\beta)} d e^{\frac{W_T}z} - \dint_{y\in{\mathrm{SYZ}(\mathcal{L})}} e^{\frac{W_T}z} \dfrac{f(Y)(Y^n-(p^\beta)^n)}{(Y-p^\beta)(p^\beta)^nf'(p^\beta)} \dfrac{dY}{Y} \\
= & z \dint_{y\in{\mathrm{SYZ}(\mathcal{L})}} e^{\frac{W_T}z} d\dfrac{1}{(Y-p^\beta)f'(p^\beta)} - \dint_{y\in{\mathrm{SYZ}(\mathcal{L})}} e^{\frac{W_T}z} \dfrac{f(Y)(Y^n-(p^\beta)^n)}{(Y-p^\beta)(p^\beta)^nf'(p^\beta)} \dfrac{dY}{Y}\\
= & z \dint_{y\in \mathrm{SYZ}(\mathcal{L})} e^{\frac{W_T}z} d\dfrac{p^\beta}{(Y-p^\beta)\Delta^\beta}.
\end{split}\]
The last equation holds because
\[\int_{y\in{\mathrm{SYZ}(\mathcal{L})}} e^{\frac{W_T}z} g(Y)f(Y)\frac{dY}Y =
\left<\!\left< g(Y)f(Y),\frac{\kappa(\mathcal{L})}{z-\psi}\right>\!\right>_{0,2}^{W\mathbb{P}(m,n),T}
=\left<\!\left< 0,\frac{\kappa(\mathcal{L})}{z-\psi}\right>\!\right>_{0,2}^{W\mathbb{P}(m,n),T}
=0,\]
where $g(X) =\dfrac{1}{(p^\beta)^nf'(p^\beta)} \dfrac{(Y^n-(p^\beta)^n)}{(Y-p^\beta)}$ (is a polynomial of $Y$).
\end{proof}

Integrating the second equation by parts, we have
\[S_{\hat{\underline{\beta}}}^{\kappa(\mathcal{L})} (z) =-z^{k+1}\dint_{y\in \mathrm{SYZ}(\mathcal{L})}  e^{\frac{W_T}z} \frac{W_k^\beta}{\sqrt{-2}}.\]
Also notice that
\[\begin{split}
&\dsum_{\gamma=1}^{m+n} S_\alpha^{\widehat{\underline{\gamma}}} (z) S_\beta^{\widehat{\underline{\gamma}}}(-z) = (\phi_\alpha(0),\phi_\beta(0))=\Delta^\alpha \delta_{\alpha\beta},\\
&\dsum_{\alpha=1}^{m+n} S_\beta^{\widehat{\underline{\alpha}}} (-z) S_{\widehat{\underline{\alpha}}}^{\kappa(\mathcal{L)}}(z) = (\phi_\beta(0),\mathcal{K(L)}).
\end{split}
\]

\begin{thm} It holds that
\[
\dint_{y_1\in \mathrm{SYZ}(\mathcal{L}_1)}\cdots \dint_{y_\ell\in \mathrm{SYZ}(\mathcal{L}_\ell)} e^{\frac{W_T(y_1)}{z_1}+\cdots \frac{W_T(y_\ell)}{z_\ell}} \omega_{g,\ell} =
(-1)^{g-1} \left<\!\left< \dfrac{\kappa(\mathcal{L}_1)}{z_1-\psi_1}, \cdots,\dfrac{\kappa(\mathcal{L_\ell)}}{z_\ell-\psi_\ell}  \right>\!\right>_{g,\ell}.
\]
\end{thm}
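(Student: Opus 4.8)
The plan is to transport the single-leaf Laplace identity recorded just above through the graph-sum descriptions of both models, using multilinearity to reduce everything to one ordinary leaf at a time. I would begin from the identity $\omega_{g,N}=(-1)^{g-1+N}F_{g,N}^{W\mathbb{P}(m,n),T}(\mathbf{u}_1,\dots,\mathbf{u}_N,\mathbf{t})$ proved above, valid under the identification $\sqrt{-2}\,(\tilde u_j)_k^\alpha=W_k^\alpha(Y_j)$. The structural point is that $F_{g,\ell}$ is multilinear, of degree one in each insertion $\mathbf{u}_j$, and that in the B-model graph-sum formula the dependence on $Y_j$ is carried \emph{only} by the ordinary leaf at the $j$-th marked point: the edge weights $\check B_{k,\ell}^{\alpha,\beta}$, the vertex weights, and the dilaton leaves $\check h_k^\alpha$ are functions of $q$ alone. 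Hence the iterated transform $\prod_{j=1}^\ell \int_{\mathrm{SYZ}(\mathcal L_j)} e^{W_T(y_j)/z_j}(\cdot)$ factors through the $\ell$ ordinary leaves and acts on each leaf variable $\tfrac1{\sqrt{-2}}W_k^\alpha(Y_j)$ independently.

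Transforming one leaf is then exactly the integration-by-parts identity established right before the statement (which itself rests on Proposition 4.1), namely
\[
\int_{\mathrm{SYZ}(\mathcal L_j)} e^{W_T(y_j)/z_j}\,\frac{W_k^\alpha(Y_j)}{\sqrt{-2}} = -\,z_j^{-(k+1)}\,S_{\hat{\underline{\alpha}}}^{\kappa(\mathcal L_j)}(z_j).
\]
Here one must check that the boundary contributions in the integration by parts vanish; this holds because $e^{W_T/z}$ decays at the two ends of each cycle $\mathrm{SYZ}(\mathcal L_j)$ for $z$ negative real, as assumed. Substituting into the multilinear expression, I conclude that the Laplace transform of $\omega_{g,\ell}$ equals $(-1)^{g-1+\ell}$ times $F_{g,\ell}$ evaluated at the leaf values $(\tilde u_j)_k^\alpha=-\,z_j^{-(k+1)}S_{\hat{\underline{\alpha}}}^{\kappa(\mathcal L_j)}(z_j)$.

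It remains to recognize these as (minus) the dressed insertion variables of the descendent correlator. By definition $\left<\!\left< \tfrac{\kappa(\mathcal L_1)}{z_1-\psi_1},\dots,\tfrac{\kappa(\mathcal L_\ell)}{z_\ell-\psi_\ell}\right>\!\right>_{g,\ell}$ is $F_{g,\ell}$ at the insertions $\mathbf{u}_j(z)=\kappa(\mathcal L_j)/(z_j-z)$, and expanding $(\tilde u_j)_k^\alpha=[z^k]\sum_\beta S_{\hat{\underline{\beta}}}^{\hat{\underline{\alpha}}}(z)\,u_j^\beta(z)/\sqrt{\Delta^\beta}$ against the $1/z$-expansion of the two-point operator $\mathcal S$ shows this dressed variable equals $+\,z_j^{-(k+1)}S_{\hat{\underline{\alpha}}}^{\kappa(\mathcal L_j)}(z_j)$. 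Thus the transform has produced exactly the descendent dressed variables but with an extra sign in each of the $\ell$ slots; by multilinearity this contributes $(-1)^\ell$, and $(-1)^{g-1+\ell}(-1)^\ell=(-1)^{g-1}$ yields the claimed prefactor. Finally, since both graph sums run over the same set of stable graphs with identical automorphism factors, the two weighted sums agree term by term.

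I expect the main obstacle to be the leaf-level identification in the third paragraph: matching the $S$-dressing of the \emph{descendent} insertion $\kappa(\mathcal L_j)/(z_j-\psi_j)$ with the \emph{geometric} Laplace transform of Proposition 4.1. The difficulty is bookkeeping rather than conceptual. One must reconcile the two roles of the spectral parameter — the formal ancestor variable $z$ in the definition of $(\tilde u_j)_k^\alpha$ versus the genuine variable $z_j$ carried by $\kappa(\mathcal L_j)$ and by the Laplace kernel — keep the correct ordering and adjoint of $\mathcal S$, and track the normalizations $\sqrt{-2}$ and $\sqrt{\Delta^\alpha}$ consistently with the conventions fixed above. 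The relevant tools for the adjoint/ordering step are the unitarity relations $\sum_\gamma S_\alpha^{\hat{\underline{\gamma}}}(z)S_\beta^{\hat{\underline{\gamma}}}(-z)=\Delta^\alpha\delta_{\alpha\beta}$ and $\sum_\alpha S_\beta^{\hat{\underline{\alpha}}}(-z)S_{\hat{\underline{\alpha}}}^{\kappa(\mathcal L)}(z)=(\phi_\beta(0),\kappa(\mathcal L))$. Once the index and sign conventions are aligned with those of the graph-sum identification, the remaining verification is routine.
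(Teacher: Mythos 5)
Your proposal is correct and follows essentially the same route as the paper's proof: both reduce to Theorem 4.1 (the graph-sum identification $\omega_{g,N}=(-1)^{g-1+N}F_{g,N}$), apply the integration-by-parts Laplace identity $S_{\hat{\underline{\alpha}}}^{\kappa(\mathcal{L}_j)}(z_j)=-z_j^{k+1}\int e^{W_T/z_j}W_k^\alpha/\sqrt{-2}$ leaf by leaf, contract with $\sum_\alpha S_\beta^{\hat{\underline{\alpha}}}(-z)S_{\hat{\underline{\alpha}}}^{\kappa(\mathcal{L})}(z)=(\phi_\beta(0),\kappa(\mathcal{L}))$ to recover the descendent correlator, and track the sign as $(-1)^{g-1+\ell}\cdot(-1)^\ell=(-1)^{g-1}$. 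The only cosmetic difference is that you invoke multilinearity of the graph sum where the paper writes out the expansion in ancestor correlators explicitly.
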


\begin{proof}
By definition,
\[
\tilde{\ub}_j^\alpha(z)=\dsum_{\beta=1}^{m+n}\sqrt{\Delta^\alpha(q)}\left<\!\left< \phi_\alpha(q),\dfrac{\phi_\beta(q)}{z-\psi} \right>\!\right>_{0,2}^{W\mathbb{P}(m,n),T} \ub_j^\beta(z).
\]
Taking the Laplace transform of $\omega_{g,N}$, by Theorem 1 and definition of $\bar{\ub}_i$, we get

\[\begin{split}
& \dint_{y_1\in \mathrm{SYZ}(\mathcal{L}_1)}\cdots \dint_{y_N\in \mathrm{SYZ}(\mathcal{L}_N)} e^{\frac{W_T(y_1)}{z_1}+\cdots \frac{W_T(y_N)}{z_N}} \omega_{g,N} \\
= & \dint_{y_1\in \mathrm{SYZ}(\mathcal{L}_1)}\cdots \dint_{y_N\in \mathrm{SYZ}(\mathcal{L}_N)} e^{\frac{W_T(y_1)}{z_1}+\cdots \frac{W_T(y_N)}{z_N}} (-1)^{g-1-N} \Big( \dsum_{\beta_i,\alpha_i}\left<\!\left< \dprod_{i=1}^N \tau_{\alpha_i}(\phi_{\beta_i}(0)) \right>\!\right>_{g,N} \\
& \cdot \dprod_{i=1}^N (\bar{\ub}_i)_{\alpha_i}^{\beta_i}\Big|_{(\tilde{\ub}_j)_k^{\beta}=\frac1{\sqrt{-2}}W_k^\beta(y_j)} \Big)\\
= & \dint_{y_1\in \mathrm{SYZ}(\mathcal{L}_1)}\cdots \dint_{y_N\in \mathrm{SYZ}(\mathcal{L}_N)} e^{\frac{W_T(y_1)}{z_1}+\cdots \frac{W_T(y_N)}{z_N}}  (-1)^{g-1-N} \Bigg[ \dsum_{\beta_i,\alpha_i}\left<\!\left<\dprod_{i=1}^N \tau_{\alpha_i}(\phi_{\beta_i}(0)) \right>\!\right>_{g,N} \\
& \cdot \dprod_{i=1}^N \big( \dfrac1{\Delta^{\beta_i}} \dsum_{\alpha=1}^{m+n}\dsum_{k\in\mathbb{Z}_{\geq 0}} [z_i^{\alpha_i-k}] S_{\beta_i}^{\widehat{\underline{\alpha}}} (-z_i) \dfrac{W_k^\alpha(y_i)}{\sqrt{-2}} \big) \Bigg]\\
= & (-1)^{g-1+N} \left[\dsum_{\beta_i,\alpha_i}\left<\!\left< \dprod_{i=1}^N\tau_{\alpha_i}(\phi_{\beta_i}(0)) \right>\!\right>_{g,N} \cdot \dprod_{i=1}^N \big( \dfrac1{\Delta^{\beta_i}} \dsum_{\alpha=1}^{m+n}\dsum_{k\in\mathbb{Z}_{\geq 0}} ([z_i^{\alpha_i-k}] S_{\beta_i}^{\widehat{\underline{\alpha}}} (-z_i) )S_{\widehat{\underline{\alpha}}}^{\kappa(\mathcal{L}_1)} (z_i) (-z_i^{-k-1}) \big) \right] \\
= & (-1)^{g-1} \dsum_{\beta_i,\alpha_i}\left<\!\left<\dprod_{i=1}^N \tau_{\alpha_i}(\phi_{\beta_i}(0)) \right>\!\right>_{g,N}  \dprod_{i=1}^N  \dfrac1{\Delta^{\beta_i}} (\phi_{\beta_i}(0),\kappa(\mathcal{L}_i))z_i^{-\alpha_i-1}\\
= & (-1)^{g-1} \left<\!\left< \dfrac{\kappa(\mathcal{L}_1)}{z_1-\psi_1}, \cdots,\dfrac{\kappa(\mathcal{L_N)}}{z_N-\psi_N}  \right>\!\right>_{g,N}.
\end{split}\]
\end{proof}

\

\author{Dun Tang, School of Mathematical Sciences, Peking University, 5 Yiheyuan Road, Beijing 100871, China.
E-mail address: 1400010693@pku.edu.cn}

\end{document}